\numberwithin{equation}{section}
\newcommand{\CC}{\mathbb{C}}
\newcommand{\PP}{\mathbb{P}}
\newcommand{\QQ}{\mathbb{Q}}
\newcommand{\ZZ}{\mathbb{Z}}
\newcommand{\bP}{\mathbf{P}}
\newcommand{\bM}{\mathbf{M}}
\newcommand{\cal}{\mathcal}
\def\cB{{\cal B}}
\def\cE{{\cal E}}
\def\cF{{\cal F}}
\def\cO{{\cal O}}
\def\cU{{\cal U}}
\def\lra{\longrightarrow}
\def\begeq{\begin{equation}}
\def\endeq{\end{equation}}
\def\and{\quad{\rm and}\quad}
\def\and{\quad\text{and}\quad}
  \DeclareMathOperator{\Hom}{Hom}
\DeclareMathOperator{\id}{id}
\newtheorem{prop}{Proposition}[section]
\newtheorem{theo}[prop]{Theorem}
\newtheorem{lemm}[prop]{Lemma}
\newtheorem{coro}[prop]{Corollary}
\newtheorem{rema}[prop]{Remark}
\def\mzz{\overline{\mathbf{M}}_{0,0}}
\def\mzznthree{\overline{\mathbf{M}}_{0,0}(\PP^{n-1},3)}
\def\mzzntwo{\overline{\mathbf{M}}_{0,0}(\PP^{n-1},2)}
\def\mzznd{\overline{\mathbf{M}}_{0,0}(\PP^{n-1},d)}
\def\git{/\!/}
\let\lab=\label
\title{Moduli space of stable maps to projective space via GIT}
\date{}
\author{Young-Hoon Kiem}
\author{Han-Bom Moon}
\address{Department of Mathematics and Research Institute
of Mathematics, Seoul National University, Seoul 151-747, Korea}
\email{kiem@math.snu.ac.kr} \email{onespring@math.snu.ac.kr}
\thanks{Partially supported by KOSEF grant R01-2007-000-20064-0}
\begin{document}

\begin{abstract}
We compare the Kontsevich moduli space $\mzznd$ of stable maps to
projective space with the quasi-map space $\PP (
\mathrm{Sym}^d(\CC^2)\otimes \CC^n)\git SL(2)$. Consider the
birational map
\[
\bar\psi: \PP ( \mathrm{Sym}^d(\CC^2)\otimes \CC^n)\git
SL(2)\dashrightarrow \mzz (\PP^{n-1},d)
\]
which assigns to an $n$-tuple of degree $d$ homogeneous polynomials
$f_1,\cdots, f_n$ in two variables, the map
$f=(f_1:\cdots:f_n):\PP^1\to \PP^{n-1}$. In this paper, for $d=3$,
we prove that $\bar\psi$ is the composition of three blow-ups
followed by two blow-downs. Furthermore, we identify the
blow-up/down centers explicitly in terms of the moduli spaces
$\mzz(\PP^{n-1},d)$ with $d=1,2$. In particular, $\mzz(\PP^{n-1},3)$
is the $SL(2)$-quotient of a smooth rational projective variety. %As
%an application, we calculate the rational cohomology ring and the
%Picard group of $\mzznthree$ by using the blow-up formula of
%cohomology rings.
The degree two case $\mzzntwo$, which is the
blow-up of $\PP ( \mathrm{Sym}^2\CC^2\otimes \CC^n)\git SL(2)$ along
$\PP^{n-1}$, is worked out as a warm-up.
\end{abstract} \maketitle %\tableofcontents

%%%%%%%%%%%%%%%%%%%%%%%%%%%%%%%%%%%%%%%%%%%%%%%%%%%%%%%%%%
%%%%%%%%%%%%%%%%%%%%%%%%%%%%%%%%%%%%%%%%%%%%%%%%%%%%%%%%%%
\section{Introduction}
The space of smooth rational curves of given degree $d$ in
projective space admits various natural moduli theoretic
compactifications via geometric invariant theory (GIT), stable maps,
Hilbert scheme or Chow scheme. Since these compactifications give us
important but different enumerative invariants, it is an interesting
problem to compare the compactifications by a sequence of explicit
blow-ups and -downs. We expect all the blow-up centers to be some
natural moduli spaces (for lower degrees). Further, we expect the
difference between the intersection numbers on any two of the moduli
theoretic compactifications should be expressed in terms of the
intersection numbers of lower degrees. If the comparison of
compactifications is completed, the variation of intersection
numbers might be calculated
by localization techniques. %See for instance \cite{JKKW}.
In this paper, as a
first step, we compare the GIT compactification (or quasi-map space)
and the Kontsevich moduli space of stable maps. %Let $n\ge 2$. The
%Kontsevich's moduli space $\mzz(\PP^{n-1},d)$ of stable maps of
%degree $d$ to projective space, has been quite useful since its
%construction in 1994. But still the cohomology ring
%of $\mzz(\PP^{n-1},d)$ is not well understood. %\footnote{Throughout
%%this paper, we consider only the rational cohomology.}
%We present in this paper our approach to calculating the cohomology
%ring of $\mzz(\PP^{n-1},d)$ by Geometric Invariant Theory (GIT).
The techniques we use are the Atiyah-Bott-Kirwan theory
\cite{K2,K4,K}, variation of GIT quotients \cite{DH, Tha}, the
blow-up formula \cite{GH} and construction of stable maps by
elementary modification \cite{Kiem, CCK}.

Given an $n$-tuple $(f_1,\cdots,f_n)$ of degree $d$ homogeneous
polynomials in homogeneous coordinates $t_0,t_1$ of $\PP^1$, we have
a morphism $(f_1:\cdots :f_n):\PP^1\to \PP^{n-1}$ if $f_1,\cdots,
f_n$ have no common zeros. Thus we have an $SL(2)$-invariant
rational map $ \psi_0: \PP ( \mathrm{Sym}^d(\CC^2)\otimes
\CC^n)\dashrightarrow \mzz (\PP^{n-1},d) $ which induces a
birational map
\[
\bar\psi_0: \PP ( \mathrm{Sym}^d(\CC^2)\otimes \CC^n)\git
SL(2)\dashrightarrow \mzz (\PP^{n-1},d).
\]
Our goal is to decompose $\bar\psi_0$ into a sequence of blow-ups
and blow-downs and describe the blow-up/-down centers explicitly.

When $d=1$, $\bar\psi_0$ is an isomorphism. When $d=2$, the
following is proved in \cite[\S4]{Kiem}.

\begin{theo}\label{r2th}  $\bar\psi_0$ is the inverse of a blow-up, i.e.
$\mzz(\PP^{n-1},2)$ is the blow-up of $ \PP (
\mathrm{Sym}^2(\CC^2)\otimes \CC^n)\git SL(2)$ along
$\PP(\mathrm{Sym}^2\CC^2)\times \PP^{n-1}\git SL(2)\cong
\PP^{n-1}$.\end{theo}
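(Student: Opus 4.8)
The plan is to treat the statement with the same toolkit that the paper announces for the general case: run geometric invariant theory on the parameter space, locate and describe the singular locus of the quotient by a Luna slice, and then match $\mzzntwo$ with the blow-up by constructing the contraction explicitly and applying the universal property of blow-ups. Throughout I write $W=\mathrm{Sym}^2\CC^2\otimes\CC^n$ and $V=\PP(W)$.

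First I would run the Hilbert--Mumford criterion for the diagonal one-parameter subgroups $\diag(t,t^{-1})$, whose weights on $\langle t_0^2,t_0t_1,t_1^2\rangle$ are $2,0,-2$. This yields: a point $[(f_1,\dots,f_n)]$ is unstable iff all $f_i$ are proportional to a common square $\ell^2$; it is properly stable iff the $f_i$ have no common root (an honest degree-two morphism); and the remaining strictly semistable points are exactly those sharing a common root. A one-parameter degeneration shows that a quasi-map with common linear factor $\ell$, say $f_i=\ell g_i$, is $S$-equivalent to $t_0t_1\otimes(g_i(V(\ell)))$, a polystable point whose closed orbit has stabilizer the maximal torus $T$ (up to the Weyl involution). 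Hence the strictly semistable locus is the closure of the common-quadratic locus $\PP(\mathrm{Sym}^2\CC^2)\times\PP^{n-1}$; since binary quadratics have $\PP(\mathrm{Sym}^2\CC^2)\git SL(2)=\mathrm{pt}$, its image in the quotient is $\PP(\mathrm{Sym}^2\CC^2)\times\PP^{n-1}\git SL(2)\cong\PP^{n-1}$, which records the image $b$ of the common root. This establishes the stated description of the center.

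Next I would compute the local structure of $V\git SL(2)$ along this center through the Luna slice at a polystable point $\hat p_0=t_0t_1\otimes e_1$. Removing the two $SL(2)$-orbit directions $t_0^2\otimes e_1$ and $t_1^2\otimes e_1$ from $W/\langle\hat p_0\rangle$, the slice representation of $T$ is $\bigoplus_{j=2}^{n}(\CC_{(2)}\oplus\CC_{(0)}\oplus\CC_{(-2)})$, where $\CC_{(w)}$ is the weight-$w$ line. The weight-$0$ coordinates $y_j$ are unobstructed and move $b$ inside $\PP^{n-1}$, while the invariants of the weight $+2$ coordinates $x_j$ (from $t_0^2\otimes e_j$) and the weight $-2$ coordinates $z_k$ (from $t_1^2\otimes e_k$) are the products $x_jz_k$. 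Thus étale-locally $V\git SL(2)\cong\CC^{n-1}\times C$, where $C$ is the affine cone over the Segre embedding $\PP^{n-2}\times\PP^{n-2}\hookrightarrow\PP^{(n-1)^2-1}$, and the center is $\CC^{n-1}\times\{\mathrm{vertex}\}$. Blowing up the vertex of this determinantal cone resolves the singularity with exceptional fibre the Segre variety, so $\mathrm{Bl}_{\PP^{n-1}}(V\git SL(2))$ is smooth with exceptional divisor fibred in $\PP^{n-2}\times\PP^{n-2}$ over the center (this is where the Atiyah--Bott--Kirwan theory \cite{K,K2,K4} and the blow-up formula \cite{GH} enter, confirming the comparison at the level of cohomology).

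Finally I would construct the contraction $\pi:\mzzntwo\to V\git SL(2)$ inverse to $\bar\psi_0$ and identify it with this blow-up. On the locus of honest conics $\pi$ is the isomorphism $\bar\psi_0^{-1}$, sending a morphism to its defining tuple $(f_i)$ modulo reparametrization; on the boundary $\Delta$ of nodal maps $C_1\cup C_2\to\PP^{n-1}$ whose two lines meet over $b$, I would send the map to $b\in\PP^{n-1}$. Using the elementary modification of \cite{Kiem,CCK} to produce a family of quasi-maps from the universal stable map, together with the valuative criterion realising a nodal map as a limit of smoothings, one checks that $\pi$ is a morphism with $\pi^{-1}(\PP^{n-1})=\Delta$, and that the fibre of $\Delta$ over $b$ is the space of ordered pairs of lines through $b$, namely $\PP^{n-2}\times\PP^{n-2}$, matching the Segre fibre of Step 2 with the two factors corresponding to the weight $\pm2$ directions and the Weyl involution swapping the branches. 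Since $\pi^{-1}(\mathcal{I}_{\PP^{n-1}})$ is then invertible, $\pi$ factors through $\mathrm{Bl}_{\PP^{n-1}}(V\git SL(2))$ by the universal property of blow-up, and as both spaces are smooth and the induced map is bijective it is an isomorphism. I expect the main obstacle to be precisely this last matching: showing via the elementary modification that the $S$-equivalence degeneration on the GIT side, which a priori remembers only $b$ and the two weight $\pm2$ tangent directions, reconstructs the two genuine lines $L_1,L_2$ of the nodal stable map, so that the projectivized normal cone of the center is canonically identified with the space of nodal maps.
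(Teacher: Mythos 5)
Your strategy runs in the opposite direction from the paper's and leans on different machinery. The paper blows up \emph{upstairs}: it takes $\bP_1\to\bP_0=\PP(\mathrm{Sym}^2\CC^2\otimes\CC^n)^{ss}$ along the common-conic locus $\Sigma^2_0=[\PP(\mathrm{Sym}^2\CC^2)\times\PP^{n-1}]^{ss}$ \emph{before} quotienting, constructs an explicit family of stable maps over $\bP_1^s$ by elementary modification (blowing up $\PP^1\times\bP_1^s$ along the two sections of base points and twisting by the exceptional divisor), and then shows the induced map $\bP_1^s/SL(2)\to\mzzntwo$ is bijective, hence an isomorphism by normality of the target. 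You propose instead to work downstairs, describing the quotient locally by a Luna slice and producing a contraction $\pi:\mzzntwo\to\PP(\mathrm{Sym}^2\CC^2\otimes\CC^n)\git SL(2)$ that factors through the blow-up via its universal property. Your Hilbert--Mumford analysis and the slice computation (weights $2,0,-2$ on each copy of $\mathrm{Sym}^2\CC^2$, invariants $x_jz_k$, hence a Segre cone) are correct and do recover the paper's description of the center and of the boundary.

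However, the steps carrying the real content are left as assertions, and some would need repair. (i) The morphism $\pi$ is never constructed: extending the classifying rational map across the boundary $\Delta$ is exactly the nontrivial reverse of what the paper does with the elementary modification, and without it the universal-property argument has no input. (ii) Invertibility of $\pi^{-1}\mathcal{I}_{\PP^{n-1}}\cdot\cO_{\mzzntwo}$ does not follow from the set-theoretic equality $\pi^{-1}(\PP^{n-1})=\Delta$; an ideal with divisorial zero locus need not be invertible (compare $(x^2,xy)$), so one must verify that the scheme-theoretic preimage is Cartier by a local computation you have not indicated. (iii) The slice must be taken for the full stabilizer $N(T)$, not $T$: the local model is $(\CC^{n-1}\times C)/S_2$ with $S_2$ exchanging the two Segre factors, the fibre of $\Delta$ over $b$ is the space of \emph{unordered} pairs of lines, $(\PP^{n-2}\times\PP^{n-2})/S_2$, and one must justify that blowing up commutes with this $S_2$-quotient --- the paper sidesteps this by blowing up equivariantly upstairs and invoking Kirwan's lemma that blow-up commutes with the GIT quotient \cite{K4}. (iv) Neither $\mzzntwo$ nor the blow-up is smooth (both have $\ZZ/2$-quotient singularities along the locus of double covers of a line), so the final step should conclude from bijectivity onto a \emph{normal} projective target, as the paper does, rather than from smoothness.
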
 We reproduce the proof in \S3 for reader's
convenience.

In this paper, our focus is laid on the case where $d=3$. We prove
the following in \S5.
\begin{theo}\label{thm1.2}
The birational map $\bar\psi_0$ is the composition of three blow-ups
followed by two blow-downs. The blow-up centers are respectively,
$\PP^{n-1}$, $\overline{\mathbf{M}}_{0,2}(\PP^{n-1},1)/S_2$ (where
$S_2$ interchanges the two marked points) and the blow-up of
$\overline{\mathbf{M}}_{0,1}(\PP^{n-1},2)$ along the locus of three
irreducible components. The centers of the blow-downs are
respectively the $S_2$-quotient of a $(\PP^{n-2})^2$-bundle on
$\overline{\mathbf{M}}_{0,2}(\PP^{n-1},1)$ and a
$(\PP^{n-2})^3/S_3$-bundle on $\PP^{n-1}$. In particular,
$\bar\psi_0$ is an isomorphism when $n=2$.
\end{theo}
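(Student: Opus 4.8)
The plan is to resolve the birational map $\bar\psi_0$ by a Kirwan-type sequence of blow-ups along its indeterminacy locus, and then to contract the resulting exceptional divisors onto boundary strata of $\mzznthree$. First I would analyze the $SL(2)$-action on $\PP(\mathrm{Sym}^3(\CC^2)\otimes\CC^n)$ by the Hilbert--Mumford criterion: a tuple $[f_1:\cdots:f_n]$ is semistable exactly when no point of $\PP^1$ is a common zero of all the $f_i$ of multiplicity $\geq 2$, and since $d=3$ is odd one checks $\mu(v,\lambda)=3-2m_p$ never vanishes, so semistability coincides with stability and $X:=\PP(\mathrm{Sym}^3(\CC^2)\otimes\CC^n)\git SL(2)$ is a geometric quotient with only finite quotient singularities. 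The semistable locus is stratified by the scheme of common zeros (the base points of the associated quasi-map): the open stratum has no common zero and carries honest degree-$3$ maps, on which $\bar\psi_0$ is a morphism, while the boundary consists of loci $D_2,D_1,D_0$ on which the residual map has degree $2,1,0$ (one, two, or three simple common zeros). Hence the indeterminacy locus is the nested chain $\overline{D_2}\supset\overline{D_1}\supset D_0$, and $D_0$, the locus of rank-one tensors $h\otimes c$ with $h$ a binary cubic of distinct roots, is isomorphic to $\PP^{n-1}$ by $3$-transitivity of $PGL(2)$ on distinct triples.

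Next I would resolve $\bar\psi_0$ by blowing up these strata in order of increasing residual degree (deepest first). Blowing up $X$ along $D_0\cong\PP^{n-1}$ is the first blow-up. I expect the proper transform of $\overline{D_1}$ to be identified, via the two unordered base points together with the residual degree-$1$ map, with $\overline{\mathbf{M}}_{0,2}(\PP^{n-1},1)/S_2$; blowing this up is the second blow-up. After this, the proper transform of $\overline{D_2}$ should be identified with $\overline{\mathbf{M}}_{0,1}(\PP^{n-1},2)$, where the further blow-up along its locus of three irreducible components---forced by the degree-$2$ comparison of Theorem~\ref{r2th}---is the third blow-up. Justifying these identifications requires normal-bundle computations in the prequotient together with the construction of stable maps by elementary modification \cite{Kiem,CCK}: at a base point of multiplicity $m$ one attaches a bubble carrying the degree-$m$ part of the quasi-map, and performing this in families over the successively blown-up space $\tilde Z$ yields a universal family of degree-$3$ stable maps, hence a morphism $\Phi:\tilde Z\lra\mzznthree$.

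It then remains to show that $\Phi$ is a composition of exactly two blow-downs. I would isolate the two $\Phi$-exceptional divisors of $\tilde Z$, exhibit each as a projective bundle contracted along its ruling, and apply the blow-up/contraction criterion \cite{GH} after checking that the relevant normal bundle restricts to $\cO(-1)$ on the fibers (the change of stable/semistable loci being governed by variation of GIT \cite{DH,Tha}). Their images are the two stated centers in $\mzznthree$: the stratum whose domain is a contracted spine with three degree-$1$ tails, recording the node $x\in\PP^{n-1}$ and three unordered line-directions through $x$, which is a $(\PP^{n-2})^3/S_3$-bundle over $\PP^{n-1}$ (the image of the exceptional divisor over $D_0$); and the stratum built from a degree-$1$ central map with two degree-$1$ tails, which is the $S_2$-quotient of a $(\PP^{n-2})^2$-bundle over $\overline{\mathbf{M}}_{0,2}(\PP^{n-1},1)$ (the image of the exceptional divisor over $D_1$). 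Here each $\PP^{n-2}$ arises as the space of lines through a point, i.e. the attaching directions of the bubbles. For the final assertion, when $n=2$ one has $\PP^{n-2}=\mathrm{pt}$, so these projective bundles have $0$-dimensional fibers; the two contractions are then isomorphisms and, tracing back, so are the three blow-ups, whence $\bar\psi_0$ is an isomorphism.

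The main obstacle I anticipate is the precise identification of the three blow-up centers as the asserted moduli spaces together with the verification that each center is smooth---a delicate deformation-theoretic and normal-bundle computation, most intricate for the third center, which is itself a blow-up and where the three-component degeneration of the degree-$2$ maps interacts with the bubbling. Equally delicate is proving that $\Phi$ is literally a sequence of smooth blow-downs rather than a more general contraction, which forces one to present the $\Phi$-exceptional divisors as genuine projective bundles with normal bundle $\cO(-1)$ along the fibers; this is where the Atiyah--Bott--Kirwan description of the exceptional divisors \cite{K2,K4,K} and the elementary-modification analysis of the fibers of $\Phi$ must be combined with care.
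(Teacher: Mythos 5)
Your outline follows essentially the same route as the paper: the Hilbert--Mumford analysis showing stability equals semistability, the stratification of $\bP^s$ by the number of (necessarily simple) base points, three blow-ups performed deepest-stratum-first, identification of the centers by normal-cone computations and variation of GIT, construction of the family of stable maps by blowing up the base-point sections of the universal curve and twisting by the exceptional divisors (elementary modification), and two contractions justified by exhibiting the relevant divisors as projective bundles with normal bundle $\cO(-1)$ on the fibers. Two steps, however, do not survive scrutiny.

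The more serious one is your argument for the final assertion. The fibers contracted by the two blow-downs are $\PP^1$ and $\PP^2$ respectively --- they are the rulings obtained by projectivizing the two (resp.\ three) local summands of the normal bundle of $\bar\Sigma_1^2$ (resp.\ $\Sigma_0^3$) --- and their dimensions are independent of $n$. The $(\PP^{n-2})^2$- and $(\PP^{n-2})^3$-bundles you invoke are the \emph{images} of the contracted divisors, not the fibers of the contractions, so the fact that $\PP^{n-2}$ is a point when $n=2$ does not make the contractions isomorphisms; a divisorial contraction never is. Nor are the first two blow-ups isomorphisms when $n=2$: their centers have codimensions $3$ and $2$ in $\PP^7$. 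The correct mechanism is cancellation, as in the paper: for $n=2$ the third center $\bar\Sigma_2^1$ is already a divisor, so the third blow-up is trivial, and then the first blow-down exactly undoes the second blow-up while the second blow-down undoes the first. A secondary imprecision: after the first two blow-ups the proper transform of the one-base-point locus is not $\overline{\mathbf{M}}_{0,1}(\PP^{n-1},2)$ but its blow-up along the locus of stable maps with three irreducible components (one must track the effect of the first two blow-ups through the immersion $[\PP^1\times\PP(\CC^n\otimes\CC^3)]^s\to\bP_0$, as in the paper's Lemma 5.4 and Corollary 5.5); the third blow-up is along this entire blown-up space, and your phrasing, read literally, would introduce a fourth blow-up along the three-component sublocus. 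Finally, even after exhibiting the two contractions you still need the bijectivity-plus-normality (Riemann existence / Zariski main theorem) argument to identify the contracted space with $\mzznthree$ itself rather than with some variety merely mapping to it.
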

Here of course, $S_k$ denotes the symmetric group on $k$ letters.

Let $\bP_0=\PP ( \mathrm{Sym}^3(\CC^2)\otimes \CC^n)^s$ be the
stable part of $\PP ( \mathrm{Sym}^3(\CC^2)\otimes \CC^n)$ with
respect to the action of $SL(2)$ induced from the canonical action
on $\CC^2$. Let $\bP_1$ be the blow-up of $\bP_0$ along the locus of
$n$-tuples of homogeneous polynomials having three common zeros (or,
\emph{base points}). We get $\bP_2$ by blowing up $\bP_1$ along the
proper transform of the locus of two common zeros. Let $\bP_3$ be
the blow-up of $\bP_2$ along the proper transform of the locus of
one common zero. Then we can construct a family of stable maps of
degree 3 to $\PP^{n-1}$ parameterized by $\bP_3$ by using elementary
modification. Thus we obtain an $SL(2)$-invariant morphism
\[
\psi_3:\bP_3\lra \mzznthree .
\]

The proper transform of the exceptional divisor of the second
blow-up turns out to be a $\PP^1$-bundle on a smooth variety and the
normal bundle is $\cO(-1)$ on each fiber. So, we can blow down this
divisor and obtain a complex manifold $\bP_4$. Further, the proper
transform of the exceptional divisor of the first blow-up now
becomes a $\PP^2$-bundle on a smooth variety and the normal bundle
is $\cO(-1)$ on each fiber. Hence we can blow down $\bP_4$ to obtain
a complex manifold $\bP_5$. The morphism $\psi_3$ is constant along
the fibers of the blow-downs $\bP_3\to \bP_4\to \bP_5$ and hence
factors through a holomorphic map $\psi_5:\bP_5\lra \mzznthree$
which induces
$$\bar\psi_5: \bP_5/SL(2)\lra \mzznthree .$$
We can check that this is bijective and therefore $\bar\psi_5$ is an
isomorphism of projective varieties by the Riemann existence theorem
\cite[p.442]{Hartshorne}, because $\mzznthree$ is a normal
projective variety. %In particular, we obtain an isomorphism
%\[
%H^*(\mzznthree)\cong H^*(\bP_5/SL(2))\cong H^*_{SL(2)}(\bP_5).
%\]
%and upon computing the equivariant cohomology ring
%$H^*_{SL(2)}(\bP_5)$, via the blow-up formula, from
%$H^*_{SL(2)}(\bP_0)$ we obtain the cohomology ring of $\mzznthree$.
In summary, we have the following diagram.
\[
\xymatrix{ \bP_3\ar[r]^{\pi_3}\ar[d]_{\pi_4}\ar[dr]^{p_3}&
\bP_2\ar[r]^{\pi_2}&\bP_1\ar[r]^{\pi_1}&\bP_0\ar[d]\\
\bP_4\ar[d]_{\pi_5} &\bP_3/SL(2)\ar[d]\ar[rr]\ar[drr]^{\bar\psi_3} &&\bP_0/SL(2)\ar@{.>}[d]^{\bar\psi_0}\\
\bP_5\ar[r]^{p_5}& \bP_5/SL(2)\ar[rr]^{\cong}_{\bar\psi_5} &&
\mzznthree . }
\]

Theorems \ref{r2th} and \ref{thm1.2} provide us with a new way of
calculating the cohomology rings of the moduli spaces of stable
maps. By \cite{K2,K4,K}, the cohomology ring of the $SL(2)$-quotient
of a projective space is easy to determine. As an application of
Theorem \ref{r2th}, we prove the following in \S3 by the blow-up
formula of cohomology rings.
\begin{theo}
(1) The rational cohomology  ring $H^*(\mzzntwo)$ is isomorphic to
\[
\QQ [\xi,\alpha^2,\rho]/\left\langle
\frac{(\rho+2\alpha+\xi)^n-\xi^n}{\rho+2\alpha}+\frac{(\rho-2\alpha+\xi)^n-\xi^n}{\rho-2\alpha},\right.
\]
\[\left.
(\rho+2\alpha+\xi)^n+(\rho-2\alpha+\xi)^n, \xi^n\rho \right\rangle
\]
where $\xi$, $\rho$, and $\alpha^2$ are generators of degree $2,2,4$
respectively.

(2) The Poincar\'e polynomial $P_t(\mzzntwo)=\sum_{k\ge 0}t^k\dim
H^k(\mzzntwo)$ is
\[
\frac{(1-t^{2n+2})(1-t^{2n})(1-t^{2n-2})}{(1-t^2)^2(1-t^4)}.
\]

(3) The \emph{integral} Picard group of $\mzzntwo$ is
\[
\mathrm{Pic}(\mzzntwo)=\left\{\begin{matrix}\ZZ\oplus \ZZ& \text{for } & n\ge 3\\
\ZZ &\text{for } & n=2\end{matrix}\right.
\]
\end{theo}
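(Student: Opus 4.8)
The plan is to compute the rational cohomology ring of $X:=\PP(\mathrm{Sym}^2(\CC^2)\otimes\CC^n)\git SL(2)$ by the Atiyah--Bott--Kirwan method \cite{K2,K4,K} and then transport the answer across the blow-up of Theorem~\ref{r2th}, which presents $\mzzntwo$ as the blow-up of $X$ along $Z\cong\PP^{n-1}$, via the blow-up formula \cite{GH}. All three assertions follow once $H^*(X)$, the normal bundle $N_{Z/X}$, and the restriction map $H^*(X)\to H^*(Z)$ are understood.

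First I would compute $H^*_{SL(2)}(W)$ for $W=\PP(V)$, $V=\mathrm{Sym}^2(\CC^2)\otimes\CC^n$. Since the Borel construction $W\times_{SL(2)}ESL(2)\to BSL(2)$ is a projective bundle, $H^*_{SL(2)}(W)$ is generated over $H^*(BSL(2))=\QQ[\alpha^2]$ by the equivariant hyperplane class $\xi$. Pulling back to the maximal torus $T$ and using that $V$ has $T$-weights $2\alpha,0,-2\alpha$ each with multiplicity $n$, the projective-bundle relation becomes $\xi^n(\xi-2\alpha)^n(\xi+2\alpha)^n=0$ in $H^*_T(W)=\QQ[\alpha,\xi]$. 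Kirwan surjectivity then gives an epimorphism $H^*_{SL(2)}(W)\twoheadrightarrow H^*(X)$ whose kernel is generated by the equivariant Euler classes of the unstable strata in the Hilbert--Mumford stratification (the locus of $n$-tuples with a common double root, i.e.\ of the form $\ell^2\otimes v$); taking Weyl invariants yields a presentation $H^*(X)=\QQ[\xi,\alpha^2]/I$. The same stratification, being equivariantly perfect, computes $P_t(X)$ by subtracting the strata contributions from $P_t^{SL(2)}(W)=\frac{1}{1-t^4}\cdot\frac{1-t^{6n}}{1-t^2}$.

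Next I would run the blow-up formula. The center $Z\cong\PP^{n-1}$ is the image of the strictly semistable tuples $q\otimes v$ (common quadratic factor), and its normal bundle in $X$ should split, according to the $T$-weights $\pm2\alpha$ of the two off-diagonal directions, into summands whose equivariant Chern roots are $\rho\pm2\alpha+\xi$; this is the expected origin of the factors $(\rho\pm2\alpha+\xi)$ in the statement, with $\rho$ the class of the exceptional divisor $E=\PP(N_{Z/X})$. Combining the Grothendieck relation on $E\to Z$ with the relations of $I$ and with $\xi|_Z=h$, $h^n=0$ (which forces $\xi^n\rho=0$), one reads off the three relations of part~(1). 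For part~(2) the additive blow-up formula
\[
P_t(\mzzntwo)=P_t(X)+\bigl(t^2+t^4+\cdots+t^{2(c-1)}\bigr)P_t(Z),\qquad c=\mathrm{codim}_X Z=2n-3,
\]
with $P_t(Z)=\frac{1-t^{2n}}{1-t^2}$, simplifies to the stated product.

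Finally, for part~(3) I would observe that $\mzzntwo$ is smooth, rational and projective with cohomology concentrated in even degrees and free of torsion, so $\mathrm{Pic}(\mzzntwo)\cong H^2(\mzzntwo;\ZZ)$. Since $H^2(X;\ZZ)$ has rank one (generated by $\xi$), the exceptional divisor contributes one new generator precisely when $c\ge2$, i.e.\ when $n\ge3$, giving $\ZZ\oplus\ZZ$; for $n=2$ the center $Z$ is a divisor ($c=1$), the ``blow-up'' is an isomorphism, and $\mathrm{Pic}=\ZZ$. The hard part will be the rigorous computation of $H^*(X)$ in the presence of strictly semistable points (the tuples with a common linear factor): one must either pass to Kirwan's partial desingularization or verify directly that the quotient's rational cohomology is the Weyl-invariant part of $H^*_{SL(2)}(W^{ss})$, and then pin down $N_{Z/X}$ and the restriction map precisely enough to produce the stated relations, together with the upgrade from rational to integral, torsion-free coefficients needed for~(3).
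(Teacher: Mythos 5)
There is a genuine gap, and it sits exactly at the point you flag as ``the hard part'' but then do not resolve: your computation is organized around the singular quotient $X=\PP(\mathrm{Sym}^2\CC^2\otimes\CC^n)\git SL(2)$, and neither of your two main tools applies to it. Since $d=2$ is even, $\bP^{ss}\ne\bP^s$: the locus $\Sigma^2_0=[\PP W\times\PP V]^{ss}$ of tuples with two base points consists of closed orbits with $\CC^*$-stabilizers, so $X$ is singular precisely along the center $Z\cong\PP^{n-1}$. Consequently (i) the Griffiths--Harris blow-up formula, which presupposes a smooth ambient manifold and a smooth center with a normal bundle, is not available for $\mzzntwo\to X$; and (ii) $P_t(X)\ne P^{SL(2)}_t(\bP^{ss})$, since the equivariant Poincar\'e series of a semistable locus with $\CC^*$-stabilizers is an infinite series (for $n=2$ it is $1+t^2+2t^4+t^6+t^8+\cdots$ while $X\cong\PP^2$). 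Your additive formula $P_t(\mzzntwo)=P_t(X)+(t^2+\cdots+t^{2(c-1)})P_t(Z)$ is in fact false for $n\ge 4$: the exceptional locus of $\mzzntwo\to X$ is a $(\PP^{n-2}\times\PP^{n-2})/S_2$-bundle over $\PP^{n-1}$, not a $\PP^{2n-4}$-bundle, and $(\PP^{n-2})^2/S_2$ has strictly more rational cohomology than $\PP^{2n-4}$ once $n\ge 4$.

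The paper's proof avoids $X$ entirely by running Kirwan's partial desingularization upstairs: blow up the smooth $\bP_0=\bP^{ss}$ along the smooth $G$-invariant $\Sigma^2_0$ (where the equivariant blow-up formula is legitimate, with the normal bundle computed as $\Hom(K,C)$), observe that on $\bP_1$ semistability coincides with stability so that $H^*_G(\bP_1^s)\cong H^*(\mzzntwo)$, and then subtract the stratum of $\bP_1$ that has become unstable. This last subtraction is where the theorem actually comes from: the two displayed relations of part (1) are the generators of the Gysin image of that unstable stratum (restricted to $\PP N^+\oplus\PP N^-$), not consequences of the Grothendieck relation on the exceptional divisor, which only yields the degree-$(2n-2)$ product relation that ends up redundant. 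Your plan as written would therefore produce neither the stated relations nor the stated Poincar\'e polynomial; to repair it you would need to insert the equivariant blow-up upstairs, the identification of the newly unstable stratum and its Euler class, and the final coprimality argument showing the three listed polynomials generate the whole relation ideal. The Picard group claim likewise needs the equivariant route ($\mathrm{Pic}(\bP_1^s)^G$ plus Kempf descent) rather than $H^2(X;\ZZ)$ of the singular quotient.
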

%The ring structure turns out to be equivalent to the description of
Behrend-O'Halloran \cite[Proposition 4.27]{BOH} gave a
\emph{recursive formula} for a set of generators of the relation
ideal but \emph{closed expressions} for a set of generators were
unknown. The Poincar\'e polynomial is equivalent to the calculation
of Getzler and Pandharipande \cite{GeP}. The \emph{rational} Picard
group $\mathrm{Pic}(\mzzntwo)\otimes \QQ$ was calculated by
Pandharipande \cite{Pand}.

In \S6, we deduce the following from Theorem \ref{thm1.2}.
\begin{theo}\label{thm1.3}
(1) The Poincar\'e polynomial $P_t(\mzznthree)$ is
\[
P_t(\mzznthree)=\left(\frac{1-t^{2n+8}}{1-t^6}+2\frac{t^4-t^{2n+2}}{1-t^4}\right)
\frac{(1-t^{2n})}{(1-t^2)}\frac{(1-t^{2n})(1-t^{2n-2})}{(1-t^2)(1-t^4)}.
\]
%\[
%P_t(\mzznthree)=\frac{(1-t^{4n-2})(1-t^{4n})}{(1-t^2)(1-t^4)} + \frac{t^2-t^{6n-6}}{1-t^2}\frac{(1-t^{2n})}{(1-t^2)}
%\]
%\[
%+\frac{t^4-t^{4n-4}}{1-t^4}\frac{(1-t^6)}{(1-t^2)}\frac{(1-t^{2n})(1-t^{2n-2})}{(1-t^2)(1-t^4)}
%+\frac{t^2-t^{2n-2}}{1-t^2}\frac{(1-t^{2n+2})(1-t^{2n})(1-t^{2n-2})}{(1-t^2)(1-t^4)(1-t^6)}
%\]
%\[
%-\frac{t^2-t^6}{1-t^2}\frac{(1-t^{2n})}{(1-t^2)}\frac{(1-t^{2n+2})(1-t^{2n})(1-t^{2n-2})}{(1-t^2)(1-t^4)(1-t^6)}.
%\]

(2) The rational cohomology ring of
$H^*(\overline{\bM}_{0,0}(\PP^\infty, 3))=\lim_{n\to
\infty}H^*(\overline{\bM}_{0,0}(\PP^{n-1}, 3))$ is isomorphic to
\[
\QQ[\xi,\alpha^2,\rho_1^3,\rho_2^2,\rho_3,\sigma]/\langle
\alpha^2\rho_1^3, \rho_1^3\sigma, \sigma^2-4\alpha^2\rho_3^2 \rangle
\]
where $\xi,\rho_3$ are degree 2 classes, $\sigma, \rho_2^2,
\alpha^2$ are degree 4 classes, and $\rho_1^3$ is a degree 6 class.
The rational cohomology ring of $H^*(\overline{\bM}_{0,0}(\PP^1,
3))$ is isomorphic to
\[
\QQ[\xi,\alpha^2]/\langle
\frac{(\xi+\alpha)^2(\xi+3\alpha)^2-(\xi-\alpha)^2(\xi-3\alpha)^2}{2\alpha},
\frac{(\xi+\alpha)^2(\xi+3\alpha)^2+(\xi-\alpha)^2(\xi-3\alpha)^2}{2}
\rangle .\]

(3) The Picard group of $\mzznthree$ is
\[
\mathrm{Pic}(\mzznthree)=\left\{\begin{matrix}\ZZ\oplus \ZZ& \text{for } & n\ge 3\\
\ZZ &\text{for } & n=2\end{matrix}\right.
\]
The generators are $\frac13 (H+\Delta)$ and $\Delta$ for $n\ge 3$
and $\frac13 (H+\Delta)$ for $n=2$, where $\Delta$ is the boundary
divisor of reducible curves and $H$ is the locus of stable maps
whose images meet a fixed codimension two subspace in $\PP^{n-1}$.
\end{theo}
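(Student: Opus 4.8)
The plan is to read off all three statements from the blow-up/blow-down factorization of $\bar\psi_0$ established in Theorem~\ref{thm1.2}, feeding the cohomology of the GIT quotient $\bP_0/SL(2)$ (computable by the Atiyah--Bott--Kirwan theory \cite{K2,K4,K}) through the blow-up formula \cite{GH} at each of the five steps. Since every center appearing in Theorem~\ref{thm1.2} is a smooth $SL(2)$-invariant subvariety and $SL(2)$ acts with finite stabilizers on the relevant loci, blowing up commutes with passage to the quotient; hence the whole diagram descends to a chain of honest blow-ups and blow-downs of smooth projective orbifolds
\[
\bP_0/SL(2)\leftarrow \bP_1/SL(2)\leftarrow \bP_2/SL(2)\leftarrow \bP_3/SL(2)\to \bP_4/SL(2)\to \bP_5/SL(2)\cong\mzznthree,
\]
and I work throughout with rational cohomology so that the orbifold singularities are invisible.

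For part (1), recall that if $\tilde X\to X$ is the blow-up of a smooth projective variety along a smooth center $C$ of codimension $c$, then $P_t(\tilde X)=P_t(X)+P_t(C)\cdot(t^2+t^4+\cdots+t^{2c-2})$. Applying this to the three blow-ups, with centers $\PP^{n-1}$ (codimension $3(n-1)$), $\overline{\mathbf{M}}_{0,2}(\PP^{n-1},1)/S_2$ (codimension $2(n-1)$), and the blow-up of $\overline{\mathbf{M}}_{0,1}(\PP^{n-1},2)$ (codimension $n-1$), and reading the two blow-downs backwards as blow-ups of $\bP_5/SL(2)$ along the two bundle-type centers (the $S_2$-quotient of a $(\PP^{n-2})^2$-bundle, of codimension $2$, and the $(\PP^{n-2})^3/S_3$-bundle, of codimension $3$), one obtains
\[
P_t(\mzznthree)=P_t(\bP_0/SL(2))+\sum_i P_t(C_i)\,\tfrac{t^2-t^{2c_i}}{1-t^2}-\sum_j P_t(D_j)\,\tfrac{t^2-t^{2e_j}}{1-t^2}.
\]
The first term comes from Kirwan's formula for the quotient of a projective space, and the Poincar\'e polynomials of the centers are either elementary ($\PP^{n-1}$ and projective bundles over it) or reduce to the lower-degree moduli spaces $\overline{\mathbf{M}}_{0,k}(\PP^{n-1},d)$ with $d\le 2$, whose $S_k$-equivariant Betti numbers are available (for $d=2$ via Theorem~\ref{r2th}); substituting and collecting terms yields the closed form. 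When $n=2$ all three centers are empty, $\bar\psi_0$ is an isomorphism, and the formula reduces to Kirwan's computation for $\mzz(\PP^1,3)$.

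For part (2), I upgrade the additive computation to a ring computation by the multiplicative blow-up formula: $H^*(\tilde X)$ is generated over $H^*(X)$ by the exceptional divisor class, with relations governed by the Chern classes of the normal bundle of $C$ and by $H^*(C)$. Passing to the limit $n\to\infty$ pushes the Kirwan relations off to infinity, leaving $\xi,\alpha^2$ (pulled back from $\bP_0/SL(2)$) as free polynomial generators together with the classes $\rho_1^3,\rho_2^2,\rho_3,\sigma$ coming from the three exceptional divisors and the two blow-downs. The relations $\alpha^2\rho_1^3=0$ and $\rho_1^3\sigma=0$ should follow from the disjointness of the supports of the corresponding divisors after the blow-downs, while $\sigma^2-4\alpha^2\rho_3^2=0$ should come from the self-intersection (normal-bundle) relation on the last exceptional divisor together with the $S_2$/$S_3$-symmetrization defining the blow-down centers; verifying these intersection-theoretic identities is the most delicate point. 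For the $\PP^1$ case I again use $\mzz(\PP^1,3)\cong\bP_0/SL(2)$ and transcribe Kirwan's presentation of the cohomology ring of the $SL(2)$-quotient of $\PP(\mathrm{Sym}^3\CC^2\otimes\CC^2)$.

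For part (3), the rank of $\mathrm{Pic}$ equals $\dim H^2$, read off as the coefficient of $t^2$ in part (1): this is $1$ for $n=2$ and $2$ for $n\ge3$. To pin down the integral structure I track divisor classes through the chain, identifying the boundary divisor $\Delta$ of reducible stable maps and the incidence class $H$, and showing that the exceptional and contracted divisors generate exactly the lattice spanned by $\tfrac13(H+\Delta)$ and $\Delta$; the factor $\tfrac13$ reflects the weight normalization forced by the $SL(2)$-linearization on $\mathrm{Sym}^3\CC^2$. The main obstacle throughout lies in the second and third inputs: controlling the equivariant Betti numbers and ring structure of the lower-degree moduli spaces and of the bundles over them, and then verifying the precise multiplicative relations. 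Once these are in hand, all three parts follow by bookkeeping with the blow-up formula.
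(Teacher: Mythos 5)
Your proposal follows essentially the same route as the paper: compute $H^*_{SL(2)}(\bP_0)$ by Atiyah--Bott--Kirwan theory, push the Poincar\'e polynomial and the ring structure through the three blow-ups and two blow-downs of Theorem \ref{thm1.2} via the blow-up formula (taking $n\to\infty$ for the ring so the Kirwan relations disappear), and obtain the Picard group by reducing, after discarding codimension $\ge 2$ loci, to a single blow-up together with Kempf's descent lemma and the degree-$6$ resultant identity $\pi_3^*\cO(6)=H+\Delta$. The points you flag as delicate (the normal-bundle relations producing $\sigma^2-4\alpha^2\rho_3^2$, and the integral normalization $\frac13(H+\Delta)$) are exactly where the paper supplies the concrete computations, and your plan is consistent with them.
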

It may be possible to find the cohomology ring of $\mzznthree$ for
all $n$ from Theorem \ref{thm1.2} but we content ourselves with the
$\PP^\infty$ case and the $\PP^1$ case in this paper. The above
description of $H^*(\overline{\bM}_{0,0}(\PP^\infty, 3))$ is
equivalent to the description of Behrend-O'Halloran \cite[Theorem
4.15]{BOH} and the Poincar\'e polynomial is equivalent to the
calculation given by Getzler and Pandharipande \cite{GeP}. The
\emph{rational} Picard group $\mathrm{Pic}(\mzznthree)\otimes \QQ$
was calculated by Pandharipande \cite{Pand} but the calculation of
the \emph{integral} Picard group seems new.

%There are other useful approaches for calculation of $H^*(\mzz
%(\PP^{n-1},d))$. In \cite{BOH}, Behrend and O'Halloran describes
%their approach by utilizing $\CC^*$ action and the method of
%Akildiz-Carrell. In particular, they calculate the cohomology ring
%for $d=2$ and make a conjecture about $H^*(\mzznthree)$, which is
%proved there for $n\le 6$. In \cite{MM}, Mustata and Mustata define
%a larger ring $T^*$ than $H^*(\mzz (\PP^{n-1},d))$ and characterize
%$H^*(\mzz (\PP^{n-1},d))$ as the invariant subring of $T^*$ with
%respect to a finite group action. Using this, they prove the
%conjecture of Behrend and O'Halloran for the $d=3$ case. In
%\cite{GeP}, Getzler and Pandharipande calculate the Betti numbers of
%$\mzz (\PP^{n-1},d)$ by using the Hodge-Deligne polynomial. Our
%approach is totally different from the above and relies only on
%elementary tools and GIT.
In a subsequent paper, we shall work out the case for $d=4$ and
higher. %In short, we expect that $\mzz(\PP^{n-1},4)$ is the
%$SL(2)$-quotient of the composition of one weighted blow-up, four
%smooth blow-ups and three smooth blow-downs of $\PP (
%\mathrm{Sym}^4(\CC^2)\otimes \CC^n)^s$. The variation of
%intersection numbers will be treated separately in the future.
In \cite{JKi}, we compare the Kontsevich moduli space
$\mzz(\PP^3,3)$, the Hilbert scheme of twisted cubics and Simpson's
moduli space of stable sheaves.

%%%%%%%%%%%%%%%%%%%%%%%%%%%%%%%%%%%%%%%%%%%%%%%%%%%%%%%%%%
%%%%%%%%%%%%%%%%%%%%%%%%%%%%%%%%%%%%%%%%%%%%%%%%%%%%%%%%%%
\section{Preliminaries}

%%%%%%%%%%%%%%%%%%%%%%%%%%%%%%%%%%%%%%%%%%%%%%%%%%%%%%%%%%
\subsection{Kontsevich moduli space}

The Kontsevich moduli space
$\overline{\mathbf{M}}_{0,k}(\PP^{n-1},d)$ or the moduli space of
stable maps to $\PP^{n-1}$ of genus 0 and degree $d$ with $k$ marked
points is a compactification of the space of smooth rational curves
of degree $d$ in $\PP^{n-1}$ with $k$ marked points. It is the
coarse moduli space of morphisms $f:C\to \PP^{n-1}$ of degree $d$
where $C$ are connected nodal curves of arithmetic genus 0 with $k$
nonsingular marked points $p_1,\cdots, p_k$ on $C$ such that the
automorphism groups of $(f,p_1,\cdots, p_k)$ are finite. Here, an
automorphism of a stable map means an automorphism $\phi : C \to C$
that satisfies $f \circ \phi = f$ and that fixes the marked points.
See \cite{FP} for the construction and basic facts on
$\overline{\mathbf{M}}_{0,k}(\PP^{n-1},d)$.

By \cite{FP,Kim-Pand}, $\overline{\mathbf{M}}_{0,k}(\PP^{n-1},d)$ is
a normal irreducible projective variety with at worst orbifold
singularities.

%%%%%%%%%%%%%%%%%%%%%%%%%%%%%%%%%%%%%%%%%%%%%%%%%%%%%%%%%%

\subsection{Cohomology of blow-up}\lab{cohringblowup}
We recall a few basic facts on the cohomology ring of a blow-up
along a smooth submanifold from \cite{GH}. To begin with, we
consider the cohomology ring of a projective bundle $\pi:\PP N\to Y$
where $N$ is a vector bundle of rank $r$. Let $\rho=c_1(\cO_{\PP N}(1))$
and consider the exact sequence
\[
0\lra \cO(-1)\lra \pi^*N\lra Q\lra 0
\]
where $Q$ is the cokernel of the tautological monomorphism
$\cO(-1)\to \pi^*N$. By the Whitney formula,
$(1-\rho)(1+c_1(Q)+c_2(Q)+\cdots )=1+c_1(N)+\cdots+c_r(N)$. By
expanding, we obtain $c_r(Q) =\rho^r+\rho^{r-1}c_1(N)+\cdots
+c_r(N)=0$ because $Q$ is a vector bundle of rank $r-1$. By spectral
sequence, we see immediately that this is the only relation on
$H^*(Y)$ and $\rho$. In other words, $$H^*(\PP
N)=H^*(Y)[\rho]/\langle \rho^r+\rho^{r-1}c_1(N)+\cdots +\rho
c_{r-1}(N)+c_r(N) \rangle .$$

\noindent\textbf{Example.} Let $Y=B\CC^*=\PP^\infty$ be the
classifying space of $\CC^*$ and let
$\alpha=c_1(E\CC^*\times_{\CC^*}\CC)$ where $\CC^*$ acts on $\CC$
with weight $1$. Let $N$ be a vector space on which $\CC^*$ acts
with weights $w_1,\cdots, w_r$. Then the rational equivariant
cohomology ring of the projective space $\PP N$ is $$H^*_{\CC^*}(\PP
N)=H^*(E\CC^*\times_{\CC^*}\PP N)=\QQ[\rho,\alpha]/\langle
\prod_{i=1}^r(\rho+w_i\alpha)\rangle. $$

Let $X$ be a connected complex manifold and $\imath:Y\hookrightarrow
X$ be a smooth connected submanifold of codimension $r$. Let $\pi:\tilde{X}\to
X$ be the blow-up of $X$ along $Y$ and $\tilde{Y}=\PP_Y N$ be the
exceptional divisor where $N$ is the normal bundle of $Y$. From
\cite[p.605]{GH}, we have an isomorphism
\[
H^*(\tilde{X})\cong H^*(X)\oplus \bigoplus_{k=1}^{r-1}\rho^k
H^*(Y)
\]
of vector spaces where $\rho=c_1(\cO_{\tilde{X}}(-\tilde{Y}))$.
Suppose the Poincar\'e dual $[Y]\in H^{2r}(X)$ of $Y$ in $X$ is
nonzero. The ring structure of $H^*(\tilde{X})$ is not hard to
describe. First, $H^*(X)\cong \pi^*H^*(X)$ is a subring of
$H^*(\tilde{X})$ since $\pi^*$ is a monomorphism. For $\gamma\in
H^*(X)$ and $\rho^k\beta$ ($\beta\in H^*(Y)$, $1\le k<r$), their
product is $\gamma\cdot \rho^k\beta=\imath^*(\gamma)\beta\rho^k$
because $\rho$ is supported in $\tilde{Y}$. Finally, we have the
relation
\begin{equation}\lab{eqcoh2}
\rho^r+c_1(N)\rho^{r-1}+\cdots +c_{r-1}(N)\rho+\pi^*[Y]=0
\end{equation}
in the cohomology ring $H^*(\tilde{X})$. For the left hand side of
\eqref{eqcoh2} restricts to a relation on the projective bundle
$\tilde{Y}$ and thus it comes from a class in $H^*(X)$ which has
support in $Y$. By the Thom-Gysin sequence, we have an exact diagram
\[
\xymatrix{ H^0(Y)\ar[dr]_{\cup [Y]}\ar[r] & H^{2r}(X)\ar[r]\ar[d]
&H^{2r}(X-Y)\\
&H^{2r}(Y) }
\]
Since $[Y]\ne 0$, $\cup [Y]$ is injective and hence we obtain the
relation \eqref{eqcoh2}.

In particular, we have the following
\begin{prop}\lab{eqcohprop1}
If $\imath^*:H^*(X)\to H^*(Y)$ is surjective and $[Y]\ne 0$, then we
have an isomorphism of rings
\[
H^*(\tilde{X})=H^*(X)[\rho]/\langle \rho\cdot
\mathrm{ker}(\imath^*), \rho^r+c_1(N)\rho^{r-1}+\cdots
+c_{r-1}(N)\rho+\pi^*[Y] \rangle .
\]
\end{prop}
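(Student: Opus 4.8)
The plan is to leverage the vector-space decomposition
\[
H^*(\tilde{X})\cong H^*(X)\oplus \bigoplus_{k=1}^{r-1}\rho^k H^*(Y)
\]
already established from \cite[p.605]{GH}, together with the product rules recalled above, and to show that under the two hypotheses $\imath^*$ surjective and $[Y]\ne 0$ the right-hand side ring map is an isomorphism. First I would define a ring homomorphism
\[
\Phi: H^*(X)[\rho]/\langle \rho\cdot \mathrm{ker}(\imath^*),\ \rho^r+c_1(N)\rho^{r-1}+\cdots+c_{r-1}(N)\rho+\pi^*[Y]\rangle \lra H^*(\tilde{X})
\]
by sending $H^*(X)$ to its image $\pi^*H^*(X)\subset H^*(\tilde{X})$ and sending the abstract generator $\rho$ to the genuine class $\rho=c_1(\cO_{\tilde{X}}(-\tilde{Y}))$. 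To see that $\Phi$ is well defined I must check that both families of relations hold in $H^*(\tilde{X})$. The second relation \eqref{eqcoh2} is exactly the one derived above via the Thom--Gysin sequence. The first relation, $\rho\cdot\mathrm{ker}(\imath^*)=0$, follows from the product rule $\gamma\cdot\rho^k\beta=\imath^*(\gamma)\beta\rho^k$: taking $\gamma\in\mathrm{ker}(\imath^*)$ and $\beta=1$, $k=1$ gives $\gamma\cdot\rho=\imath^*(\gamma)\rho=0$.

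Next I would verify that $\Phi$ is surjective and injective. Surjectivity is where the hypothesis that $\imath^*$ is surjective enters. Since $\Phi$ hits all of $\pi^*H^*(X)$ and hits $\rho$, it hits every class of the form $\pi^*(\gamma)\rho^k$; by the product rule these equal $\rho^k\cdot\imath^*(\gamma)$, and as $\imath^*$ is surjective the classes $\imath^*(\gamma)$ range over all of $H^*(Y)$. Hence the image of $\Phi$ contains $H^*(X)$ and every $\rho^k H^*(Y)$ for $1\le k\le r-1$, which by the Griffiths--Harris decomposition is all of $H^*(\tilde{X})$. For injectivity, the key is a dimension (rank) count: I would argue that in the source ring, modulo the ideal, every element can be written in the normal form $a_0+\sum_{k=1}^{r-1}\rho^k\, b_k$ with $a_0\in H^*(X)$ and $b_k$ ranging over a set of coset representatives for $H^*(X)/\mathrm{ker}(\imath^*)\cong H^*(Y)$. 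The relation $\rho\cdot\mathrm{ker}(\imath^*)=0$ is precisely what collapses the coefficient of each positive power of $\rho$ from $H^*(X)$ down to $H^*(Y)$, and the degree-$r$ relation lets me reduce any power $\rho^{\ge r}$ back into this range. Thus the source ring has the same graded pieces as the target decomposition, and the surjection $\Phi$ between spaces of equal (finite) rank in each degree is forced to be an isomorphism.

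The step I expect to be the main obstacle is establishing that the normal-form reduction in the source ring produces \emph{exactly} the decomposition $H^*(X)\oplus\bigoplus_{k=1}^{r-1}\rho^kH^*(Y)$ and no smaller, i.e.\ that the two stated relations generate the full kernel and do not accidentally impose extra collapsing. Concretely I must confirm that after using $\rho\cdot\mathrm{ker}(\imath^*)=0$ to replace each higher coefficient by a class in $H^*(Y)$, and using \eqref{eqcoh2} to eliminate $\rho^r$ and higher, the resulting representatives are genuinely independent — this is exactly the point where $[Y]\ne 0$ is needed, since it guarantees via the injectivity of $\cup[Y]$ that the degree-$r$ relation is the \emph{only} new relation carrying $\rho$ down from degree $r$, matching the truncation at $k=r-1$ in the Griffiths--Harris formula. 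Once the graded ranks of source and target agree degree by degree, the isomorphism is immediate from the surjectivity of $\Phi$.
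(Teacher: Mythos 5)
Your proposal is correct and is essentially the argument the paper intends: it assembles the Griffiths--Harris additive decomposition $H^*(\tilde{X})\cong H^*(X)\oplus\bigoplus_{k=1}^{r-1}\rho^kH^*(Y)$, the product rule $\gamma\cdot\rho^k\beta=\imath^*(\gamma)\beta\rho^k$, and relation \eqref{eqcoh2} (which is where $[Y]\ne 0$ enters, via the injectivity of $\cup[Y]$ in the Thom--Gysin sequence) into a ring homomorphism that is surjective because $\imath^*$ is surjective and injective by the graded rank count. The paper states the proposition as an immediate consequence of the preceding discussion without writing this out, so there is nothing substantive to compare.
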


\noindent\textbf{Remark.} Everything in this subsection holds true
for equivariant cohomology rings when there is a group action on $X$
preserving $Y$. For we can simply replace $X$ and $Y$ by the
homotopy quotients $X_G=EG\times_G X$ and $Y_G=EG\times_GY$
respectively, where $EG$ is a contractible free $G$-space and
$BG=EG/G$. Recall that $H^*_G(X)=H^*(X_G)$ by definition.

%%%%%%%%%%%%%%%%%%%%%%%%%%%%%%%%%%%%%%%%%%%%%%%%%%%%%%%%%%
\subsection{Atiyah-Bott-Kirwan theory}\lab{subsectionABK}

Let $X$ be a smooth projective variety on which a complex reductive
group $G$ acts linearly, i.e. $X\subset \PP^N$ for some $N$ and $G$
acts via a homomorphism $G\to GL(N+1)$. We denote by $X^s$ (resp.
$X^{ss}$) the open subset of stable (resp. semistable) points in
$X$. Then there is a stratification $\{S_\beta|\beta\in \cB\}$ of
$X$ indexed by a partially ordered set $\cB$ such that $X^{ss}$ is
the open stratum $S_0$ and the Gysin sequence for the pair
$(U_\beta=X-\cup_{\gamma>\beta}S_\gamma, U_\beta-S_\beta)$ splits
into an exact sequence in rational equivariant cohomology
\[
0\to H^{j-2\lambda(\beta)}_G(S_\beta)\to H^j_G(U_\beta)\to
H^j_G(U_\beta-S_\beta)\to 0
\]
where $\lambda_\beta$ is the codimension of $S_\beta$. As a
consequence, we obtain an isomorphism
\[
H^j_G(X)\cong H^j_G(X^{ss})\oplus \bigoplus_{\beta\ne
0}H^{j-2\lambda(\beta)}_G(S_\beta)
\]
of vector spaces and an injection of rings
\begin{equation}\lab{localization}
\xymatrix{H^*_G(X)=H^*_T(X)^W\ar@{^(->}[r] &
H^*_T(X)\ar@{^(->}[r]^(.4){\oplus i_F^*} & \bigoplus_{F\in
\cF}H^*_T(F) }
\end{equation}
where $T$ is the maximal torus of $G$, $W$ the Weyl group, $\cF$ the set of $T$-fixed
components $F$, $i_F:F\hookrightarrow X$ the inclusion. Hence by
finding the image of $H^{j-2\lambda(\beta)}_G(S_\beta)$ for
$\beta\ne 0$ in $\bigoplus_{F\in \cF}H^*_T(F)$, we can calculate the
kernel of the surjective homomorphism
\[
\kappa : H^*_G(X)\lra H^*_G(X^{ss})
\]
induced by the inclusion $X^{ss}\to X$. Often $H^*_G(X)$ is easy to
calculate and in that case we can calculate the cohomology ring of
$H^*_G(X^{ss})$ which is isomorphic to $H^*(X\git G)$ when
$X^{ss}=X^s$. See \cite{K2,K} for details.

\bigskip
\noindent \textbf{Example.} Let $W_d=\mathrm{Sym}^d\CC^2$ on which
$G=SL(2)$ acts in the canonical way. Then the Poincar\'e polynomial
$P_t(\PP (W_d\otimes \CC^n)\git SL(2))=\sum_{k\ge 0}t^k\dim H^k(\PP
(W_d\otimes \CC^n)\git SL(2))$ is
\[
\frac{(1-t^{2mn-2})(1-t^{2mn})}{(1-t^2)(1-t^4)}
\]
for $d=2m-1$ odd. The case $n=1$ is worked out in \cite{K2} and the
general case is straightforward. In case $d=2m$ even, the
equivariant Poincar\'e series $P_t^{SL(2)}(\PP (W_d\otimes
\CC^n)^{ss})=\sum_{k\ge 0}t^k\dim H^k_{SL(2)}(\PP (W_d\otimes
\CC^n)^{ss})$ is
\[
\frac{1-t^{2n(m+1)-2}-t^{2n(m+1)}+t^{2n(2m+1)-2}}{(1-t^2)(1-t^4)}.
\]

\bigskip
\noindent \textbf{Example.} Let $G=SL(2)$ act on $\CC^2$ in the
obvious way and trivially on $\CC^n$. Let $\bP_0$ be the semistable
part of $\bP=\PP (\mathrm{Sym}^2(\CC^2)\otimes \CC^n)$ with respect
to the action of $G$. Let us determine the equivariant cohomology
ring $H^*_G(\bP_0)$. From the previous subsection, we have an
isomorphism of rings
\[
H^*_G(\bP)=\QQ[\xi,\alpha^2]/\langle
\xi^n(\xi-2\alpha)^n(\xi+2\alpha)^n \rangle
\]
where $\xi$ is the equivariant first Chern class of $\cO(1)$ because
the weights of the action of the maximal torus $T=\CC^*$ on
$\mathrm{Sym}^2(\CC^2)\otimes \CC^n$ are $2,0,-2$, each with
multiplicity $n$. By the localization theorem, we have an inclusion
\[
i^*=(i_2^*,i_0^*,i_{-2}^*):H^*_G(\bP)\hookrightarrow
H^*_T(Z_2)\oplus H^*_T(Z_0)\oplus H^*_T(Z_{-2})
\]
where $Z_k\cong \PP^{n-1}$ for $k=2,0,-2$ are the $T$-fixed
components of weight $k$. With the identification $H^*_T(Z_k)\cong
\QQ[\xi,\alpha]/\langle \xi^n\rangle$, the homomorphism $i^*$
sends $\xi$ to $(\xi-2\alpha,\xi,\xi+2\alpha)$ and $\alpha^2$ to
$(\alpha^2,\alpha^2,\alpha^2)$. There is only one unstable stratum
$S_\beta$ in $\bP$, namely $GZ_2=GZ_{-2}$. The composition of the
Gysin map
\[
j_*:H^{*-(4n-2)}_G(S_\beta)\hookrightarrow H^*_G(\bP)
\]
with $i_2^*$ or $i_{-2}^*$ is the multiplication by the Euler class
of the normal bundle to $S_\beta$ because $Z_2,Z_{-2}\subset
S_\beta$. Hence
\[
i_{\pm 2}^*\circ j_* (1)=\frac{(\xi\mp 2\alpha)^n(\xi\mp
4\alpha)^n}{\mp 2\alpha}
\]
as the normal bundle to $Z_{\pm 2}$ in $\bP$ has Euler class
$(\xi\mp 2\alpha)^n(\xi\mp 4\alpha)^n$ and the normal bundle to
$Z_{\pm 2}$ in $S_\beta$ has Euler class $\mp 2\alpha$. Since $Z_0$
is disjoint from $S_\beta$, $i_0^*\circ j_*(1)=0$. It is easy to see
that
\begin{equation}\lab{eqcoh1}
\frac{\xi^n(\xi+2\alpha)^n}{2\alpha}+\frac{\xi^n(\xi-2\alpha)^n}{-2\alpha}
\end{equation}
is the unique element in $H^*_G(\bP)$ whose image by $i^*$ is
$(i_2^*\circ j_*(1), i_0^*\circ j_*(1), i_{-2}^*\circ j_*(1))$.
Hence, $j_*(1)$ is \eqref{eqcoh1} and similarly any element in the
image of $j_*$ is of the form
\[
f(\xi,\alpha)\frac{\xi^n(\xi+2\alpha)^n}{2\alpha}+f(\xi,-\alpha)\frac{\xi^n(\xi-2\alpha)^n}{-2\alpha}
\]
for a polynomial $f(\xi,\alpha)$. Consequently, we have an
isomorphism of rings
\begin{equation}\lab{eqcohex1}
H^*_G(\bP_0)\cong \QQ[\xi,\alpha^2]/\langle
\xi^n\frac{(\xi+2\alpha)^n-(\xi-2\alpha)^n}{2\alpha},
\xi^n\frac{(\xi+2\alpha)^n+(\xi-2\alpha)^n}{2} \rangle .
\end{equation}

\bigskip
\noindent \textbf{Example.} Let $\bP_0$ be the stable part of
$\bP=\PP (\mathrm{Sym}^3(\CC^2)\otimes \CC^n)$ with respect to the
action of $G=SL(2)$. As above, the action on $\CC^n$ is trivial and
the action of $G$ on $\CC^2$ is the obvious one. As in the previous
example, we can calculate the cohomology ring
\begin{equation}\lab{eqcohex2}
H^*_G(\bP_0)\cong \QQ[\xi,\alpha^2]/\langle
\frac{(\xi+\alpha)^n(\xi+3\alpha)^n-(\xi-\alpha)^n(\xi-3\alpha)^n}{2\alpha},\end{equation}
\[
\frac{(\xi+\alpha)^n(\xi+3\alpha)^n+(\xi-\alpha)^n(\xi-3\alpha)^n}{2}
\rangle .\] We leave the verification as an exercise.

%%%%%%%%%%%%%%%%%%%%%%%%%%%%%%%%%%%%%%%%%%%%%%%%%%%%%%%%%%
\subsection{Stability after blow-up}

We recall a few basic results about GIT stability from
\cite[\S3]{K4}. Let $G$ be a complex reductive group acting on a
smooth variety $X$ with a linearization on an ample line bundle $L$.
Let $Y$ be a nonsingular $G$-invariant closed subvariety of $X$ and
let $\pi:\tilde{X}\to X$ be the blow-up of $X$ along $Y$ with
exceptional divisor $E$. The line bundle $L_d=\pi^*L^{\otimes
d}\otimes \cO(-E)$ is very ample for $d$ sufficiently large and the
action of $G$ on $L$ lifts to an action on $L_d$. With respect to
this linearization, we consider the (semi)stability of points in
$\tilde{X}$. We recall the following (\cite[(3.2) and (3.3)]{K4}):
\begin{enumerate}
\item If $\pi (y)$ is not semistable in $X$ then $y$ is not
semistable in $\tilde{X}$ ;
\item If $\pi (y)$ is stable in $X$ then $y$ is stable in
$\tilde{X}$.
\end{enumerate}
In particular, if $X^s=X^{ss}$, then
$\tilde{X}^s=\tilde{X}^{ss}=\pi^{-1}(X^s)$. If $X^s/G=X\git G$ is
projective, then $\mathrm{bl}_{Y^s}X^s/G=\tilde{X}\git G$ is
projective as well.

In case $X^s\ne X^{ss}$, $\pi^{-1}(X^{ss})$ is the union of some of
the strata $\tilde{S}_\beta$ described in \cite{K2}. (See
\cite[(3.4)]{K4}.) For $G=SL(2)$, the indexing $\beta$ are the
weights of the actions of the maximal torus $\CC^*$, on the fibers
of $L_d$ at $\CC^*$-fixed points in $\pi^{-1}(X^{ss})$.

%%%%%%%%%%%%%%%%%%%%%%%%%%%%%%%%%%%%%%%%%%%%%%%%%%%%%%%%%%
\subsection{Variation of GIT quotients}\lab{ssvarGIT}

We recall the variation of Geometric Invariant Theory (GIT)
quotients from \cite{DH, Tha}. Let $G=SL(2)$ and let  $X$ be an
irreducible smooth projective variety acted on by $G$. Since $G$ is
simple, there exists at most one linearization on any ample line
bundle $L$ on $X$. Let $L_0$ and $L_1$ be two ample line bundles on
$X$ with $G$-linearizations for which semistability coincides with
stability. Let $L_t=L_0^{1-t}\otimes L_1^t$ for rational $t\in
[0,1]$. Then we have the following.

\bigskip

\noindent (1) $[0,1]$ is partitioned into subintervals
$0=t_0<t_1<\cdots <t_n=1$ such that on each subinterval
$(t_{i-1},t_i)$ for $1\le i\le n$, the GIT quotient $X\git_t G$ with
respect to $L_t$ remains constant.

\noindent (2) The walls $t_i$ are precisely those ample line bundles
$L_t$ for which there exists $x\in X$ where the maximal torus
$\CC^*$ of $G$ fixes $x$ and acts trivially on the fiber of $L_t$
over $x$.

\bigskip

Let $\tau=t_i$ be such a wall and let $X^0$ be the union of
$G$-orbits of all such $x$ as in (2) for $\tau$. Let
$\tau^{\pm}=\tau\pm \delta$ for sufficiently small $\delta>0$ and
let $L^\pm=L_{\tau^\pm}$, $L^0=L_\tau$. Further let $X^{ss}(*)$ be
the set of semistable points with respect to $L^*$ for $*=0,+,-$.
Let $X^\pm=X^{ss}(0)-X^{ss}(\mp)$ and let $X\git G(*)$ be the
quotient of $X$ with respect to $L^*$ for $*=0,+,-$. Let $v^\pm$ be
the weight of the $\CC^*$ action on the fiber of $L^\pm$ at $x\in
X^0$. Suppose $(v^+,v^-)=1$ and the stabilizer of $x$ is $\CC^*$ for
$x\in X^0$. Then we have the following.

\bigskip

\noindent (3) Let $N$ be the normal bundle to $X^0$ in $X$ and
$N^\pm$ be the positive (resp. negative) weight space of $N$. Then
%$X\git G(\pm$ is the blow-up of $X\git G(0)$ at
$X^\pm\git G$ is the locally trivial fibration over $X^0\git G$ with
fiber weighted projective space $\PP (|w_j^\pm|)$ where $w_j^\pm$
are the weights of $N^\pm$ and $X\git G(\pm)-X^\pm\git G=X\git
G(0)-X^0\git G$.

\noindent (4) The blow-up of $X\git G(*)$ at $X^*\git G$ for
$*=0,+,-$ is the fiber product $X\git G(-)\times _{X\git G(0)}X\git
G(+)$.

\bigskip

\noindent \textbf{Example.} Let $X=\PP (\mathrm{Sym}^2\CC^2)\times
\PP (\CC^2\otimes\CC^n)$ where $G=SL(2)$ acts on $\CC^2$ and
$\mathrm{Sym}^2\CC^2$ in the obvious way and trivially on $\CC^n$.
Let us study the variation of the GIT quotient $X\git_{(1,m)}G$ as
we vary the line bundle $\cO(1,m)$. The weights of the maximal torus
$\CC^*$ on $\mathrm{Sym}^2\CC^2$ are $2,0,-2$ and the weights on
$\CC^2\otimes \CC^n$ are $1,-1$. Hence, there is only one wall,
namely at $m=2$, and $X^0$ is the union of the $G$-orbits of
$\{t_0^2\}\times \PP(t_1\otimes \CC^n)\cong \PP^{n-1}$ where
$t_0,t_1$ are homogeneous coordinates of $\PP^1$. The normal bundle
$N$ to $X^0$ has rank $n$ and the positive weight space $N^+$ has
rank $n-1$ while the negative weight space $N^-$ has rank $1$. Let
$L_0=\cO(1,1)$ and $L_1=\cO(1,3)$. Then $v^+=-1$ and $v^-=1$. So
$X^+\git G$ is a $\PP^{n-2}$-bundle on $X^0\git G=\PP^{n-1}$, namely
the projective tangent bundle $\PP T_{\PP^{n-1}}$, and $X^-\git
G=X^0\git G=\PP^{n-1}$. Therefore, $X\git _{(1,m)}G=X\git G(+)$ for
$m>>0$ is the blow-up of $X\git_{(1,1)}G=X\git G(-)$ along
$\PP^{n-1}$.

%%%%%%%%%%%%%%%%%%%%%%%%%%%%%%%%%%%%%%%%%%%%%%%%%%%%%%%%%%
%%%%%%%%%%%%%%%%%%%%%%%%%%%%%%%%%%%%%%%%%%%%%%%%%%%%%%%%%%
\section{Degree two case}\lab{sectiondegtwo}

In this section, we work out the degree two case as a warm-up. We
prove the following.

\begin{theo}
\begin{enumerate}
\item $\mzzntwo$ is the blow-up of $\PP (\mathrm{Sym}^2\CC^2\otimes
\CC^n)\git SL(2)$ along $\PP (\mathrm{Sym}^2\CC^2)\times \PP
(\CC^n)\git SL(2)=\PP^{n-1}$.
\item The rational cohomology  ring $H^*(\mzzntwo)$ is
\[
\QQ [\xi,\alpha^2,\rho]/\left\langle
\frac{(\rho+2\alpha+\xi)^n-\xi^n}{\rho+2\alpha}+\frac{(\rho-2\alpha+\xi)^n-\xi^n}{\rho-2\alpha},\right.
\]
\[\left.
(\rho+2\alpha+\xi)^n+(\rho-2\alpha+\xi)^n,
\xi^n\rho \right\rangle
\]
where $\xi$, $\rho$, and $\alpha^2$ are generators of degree $2,2,4$
respectively.
\item The Poincar\'e polynomial $P_t(\mzzntwo)=\sum_{k\ge 0}t^k\dim
H^k(\mzzntwo)$ is
\[
\frac{(1-t^{2n+2})(1-t^{2n})(1-t^{2n-2})}{(1-t^2)^2(1-t^4)}.
\]
\item The Picard group of $\mzzntwo$ is
\[
\mathrm{Pic}(\mzzntwo)=\left\{\begin{matrix}\ZZ\oplus \ZZ& \text{for } & n\ge 3\\
\ZZ &\text{for } & n=2\end{matrix}\right.
\]
\end{enumerate}
\end{theo}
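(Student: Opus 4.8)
The plan is to realize $\mzzntwo$ as a GIT quotient of an $SL(2)$-equivariant blow-up of $\bP=\PP(\mathrm{Sym}^2\CC^2\otimes\CC^n)$ and to identify that quotient with the blow-up in the statement. First I would pin down $SL(2)$-stability on $\bP$ by Hilbert-Mumford: a quasi-map $F=(f_1,\dots,f_n)$ is unstable iff all $f_i$ are proportional to a single square $\ell^2$, and strictly semistable iff the $f_i$ share a common linear factor $\ell$ (one base point). Running the destabilizing torus on $f_i=\ell m_i$ shows its closed-orbit representative is $q\otimes v$ for a binary quadratic $q$ with distinct roots, so the entire strictly semistable locus is $S$-identified in $\bP^{ss}\git SL(2)$ with $\Sigma\git SL(2)$, where $\Sigma=\PP(\mathrm{Sym}^2\CC^2)\times\PP(\CC^n)$ is the Segre locus of rank-one tensors. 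Since $SL(2)$ acts transitively on distinct-root binary quadrics, $\Sigma\git SL(2)\cong\PP^{n-1}$, and this is exactly the indeterminacy locus of $\bar\psi_0$. I would then blow up the smooth variety $\bP$ along the smooth $\Sigma$ to get a smooth $\widetilde\bP$, and invoke the stability-after-blow-up results of \cite{K4}: for the shifted linearization $\widetilde\bP^{ss}=\widetilde\bP^s$, and $\widetilde\bP\git SL(2)$ is the blow-up of $\bP^{ss}\git SL(2)$ along $\PP^{n-1}$. This is Kirwan's partial desingularization, which is why a single blow-up suffices here (and no blow-down is needed, in contrast to the odd-degree case).

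Over $\widetilde\bP$ the tautological quasi-map $\PP^1\times\widetilde\bP\dashrightarrow\PP^{n-1}$ has base locus supported on the exceptional divisor, and I would apply a single elementary modification (following \cite{Kiem,CCK}) to convert it into a genuine family of degree-$2$, genus-$0$ stable maps. Over a boundary point arising from $f_i=\ell m_i$ the domain sprouts a rational tail at the base point $p=\{\ell=0\}$: the main component maps by the line $(m_1:\cdots:m_n)$, while the tail maps to the line through the node-image $f(p)$ in the direction recorded by the exceptional normal parameter. The family is $SL(2)$-equivariant, hence descends to a morphism $\bar\psi:\widetilde\bP\git SL(2)\to\mzzntwo$ extending $\bar\psi_0$.

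The crux, and the step I expect to be the main obstacle, is to show $\bar\psi$ is bijective. Off the boundary it is clearly an isomorphism onto the stable maps with no base point (smooth conics and double covers of lines), because the quasi-map recovers the morphism together with its $SL(2)$-parametrization. On the exceptional divisor I would verify that the elementary modification sets up a bijection onto the boundary divisor $\Delta$ of reducible stable maps: injectivity amounts to recovering the node-image and the two line-directions from the modified family, and surjectivity to realizing every unordered pair of lines through a common point. The delicate bookkeeping is the residual $\CC^*$-stabilizer and the $S_2$ swapping the two lines, so the cleanest identification is at the level of stacks; the coarse spaces then agree up to the expected $\ZZ/2$-orbifold structure along the diagonal $L_1=L_2$ of $\Delta$. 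Granting bijectivity, $\bar\psi$ is a bijective proper morphism onto the normal variety $\mzzntwo$ (\cite{FP,Kim-Pand}), so by the Riemann existence theorem / Zariski's main theorem (\cite{Hartshorne}) it is an isomorphism, proving (1).

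For (2)--(4) I would compute on the smooth, properly stable quotient. Since $\widetilde\bP^{ss}=\widetilde\bP^s$, we have $H^*(\mzzntwo)=H^*_{SL(2)}(\widetilde\bP^{ss})$, and I would obtain it from the equivariant form of Proposition \ref{eqcohprop1} together with the Atiyah-Bott-Kirwan removal of the unstable strata of $\widetilde\bP$; the subtlety that $\bP$ itself has strictly semistable points is precisely what this resolution circumvents. The inputs are $H^*_{SL(2)}(\bP)=\QQ[\xi,\alpha^2]/\langle\xi^n(\xi-2\alpha)^n(\xi+2\alpha)^n\rangle$, the exceptional class $\rho$, and the torus weights of the normal directions to the center, which produce the linear forms $\rho+2\alpha+\xi$, $\rho-2\alpha+\xi$ and $\xi$; matching these against \eqref{eqcoh2} and against $\rho\cdot\ker(\imath^*)=\langle\xi^n\rho\rangle$ yields the three displayed relations. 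For (3), these relations have degrees $2n-2$, $2n$, $2n+2$, so I would verify they form a regular sequence (consistent with the socle degree $6n-8=2\dim\mzzntwo$); then $H^*(\mzzntwo)$ is a complete intersection with Poincar\'e series $\frac{(1-t^{2n-2})(1-t^{2n})(1-t^{2n+2})}{(1-t^2)^2(1-t^4)}$, the asserted formula. For (4) I would use $\mathrm{Pic}(\widetilde\bP\git SL(2))=\pi^*\mathrm{Pic}(\bP^{ss}\git SL(2))\oplus\ZZ[E]$ for a blow-up along a center of codimension $r=2n-3$: for $n\ge3$ the center has codimension $\ge2$ and $\mathrm{Pic}(\bP^{ss}\git SL(2))\cong\ZZ$, with the integral generator fixed by Kempf's descent lemma, giving $\ZZ\oplus\ZZ$; for $n=2$ the center is a divisor, the blow-up is trivial, $\bar\psi_0$ is already an isomorphism, and $\mathrm{Pic}=\ZZ$.
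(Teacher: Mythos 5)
Your overall route is the one the paper takes: a single $SL(2)$-equivariant blow-up of the quasi-map space along the Segre locus (the deepest, strictly semistable stratum), an elementary modification producing a family of stable maps, bijectivity plus normality of $\mzzntwo$ to conclude the isomorphism, and then the blow-up formula combined with the Kirwan surjection $H^*_G(\bP_1)\to H^*_G(\bP_1^s)$ for the ring presentation, the complete-intersection degree count ($2n-2$, $2n$, $2n+2$) for the Poincar\'e polynomial, and Kempf descent for the Picard group. The stability analysis, the identification of the center with $\PP^{n-1}$, and the relation bookkeeping all match the paper. One small divergence worth noting: for (4) the paper computes the Picard group equivariantly upstairs on the smooth $\bP_1^s$ and then descends, rather than applying the blow-up formula to the quotient $\PP(\mathrm{Sym}^2\CC^2\otimes\CC^n)\git SL(2)$ directly; since that quotient is singular along the center for $n\ge 3$, the upstairs computation is the safer one.

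The one genuine error is in your description of the family of stable maps over the boundary, which is exactly the step you flag as the crux. The locus $\{f_i=\ell m_i\}$ with the $m_i$ not all proportional (one base point) does not survive the blow-up: it is strictly semistable with orbit closure meeting the Segre locus, hence becomes unstable for the shifted linearization and is absent from the stable part. What does survive over the exceptional divisor lies over the two-base-point locus $f_i=c_i q$ with $q=\ell_1\ell_2$ having distinct roots; there the pulled-back quasi-map has base locus along \emph{two} sections, so the elementary modification blows up both, the original $\PP^1$ is contracted (it maps constantly to $(c_1:\cdots:c_n)$), and the two tails map to two lines through that point determined by the normal direction in the exceptional $\PP^{2n-3}$. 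Your picture --- a single tail at one base point, with the main component mapping to the line $(m_1:\cdots:m_n)$ --- is internally inconsistent with your own (correct) description of the target boundary as unordered pairs of lines through a common point, and taken literally it constructs the modification over a locus that is empty in the stable part. Once this is corrected, the bijectivity check goes through as in the paper by an explicit local computation along a one-parameter family approaching the Segre locus, and the rest of your argument stands.
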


Item (1) is Theorem 4.1 in \cite{Kiem}. We include the proof of (1)
for reader's convenience. Let $W=W_2=\mathrm{Sym}^2\CC^2$ and
$V=\CC^n$. Let $G=SL(2)$ act on $W$ in the obvious way and trivially
on $V$. An element $x$ in $\PP (W\otimes V)$ is represented by an
$n$-tuple of homogeneous quadratic polynomials in two variables
$t_0,t_1$. We call the common zero locus in $\PP^1$ of such
$n$-tuple, the \emph{base points} of $x$.

Let $\bP_0$ be the open subset of semistable points in $\PP
(W\otimes V)$ with respect to the action of $G$. Let
$\Sigma^k_0\subset \bP_0$ be the locus of $k$ base points for
$k=0,1,2$ so that we have a decomposition
\[
\bP_0=\Sigma^0_0\sqcup \Sigma^1_0\sqcup \Sigma^2_0.
\]
For $x\in\bP_0$ to have two base points, the $n$ homogeneous
polynomials representing $x$ should be all linearly dependent and
hence $\Sigma^2_0=[\PP W\times \PP V]^{ss}$ where the superscript
$ss$ denotes the semistable part with respect to $\cO(1,1)$.

Let $\pi_1:\bP_1\to \bP_0$ be the blow-up along the smooth closed
variety $\Sigma^2_0$ and let $\bP_1^s$ be the stable part of $\bP_1$
with respect to the linearization on
$\cO(1^\epsilon):=\pi_1^*\cO(1)\otimes \cO(-\epsilon E_1)$ for
sufficiently small $\epsilon >0$ where $E_1$ is the exceptional
divisor of $\pi_1$.

We claim that there is a family of stable maps to $\PP V$ of degree
$2$ parameterized by $\bP_1^s$, which gives us a $G$-invariant
morphism
\[
\psi_1:\bP_1^s\to \mzzntwo
\]
and thus a morphism $\bar\psi_1 :\bP_1^s/G\to \mzzntwo$, such that
$\bar\psi_1$ is an isomorphism on $\Sigma^0_0/G$. Note that
$\Sigma_0^0$ is the space of all holomorphic maps from $\PP^1$ to
$\PP V$ of degree 2. Since semistability coincides with stability
for $\bP_1$, $\bP_1^s/G$ is irreducible projective normal and so is
$\mzzntwo$ because $\PP^{n-1}$ is convex. Therefore, to deduce that
$\bar\psi_1$ is an isomorphism, it suffices to show $\bar\psi_1$ is
injective.

To construct a family of stable maps parameterized by $\bP_1^s$, we
start with the evaluation map $W^*\otimes (V\otimes W)\lra V$ which
gives rise to
\[
H^0(\PP^1\times \bP_0, \cO(2,1))=W\otimes (V\otimes
W)^*\longleftarrow V^*=H^0(\PP V,\cO(1)).
\]
Hence we have a rational map
\[
\varphi_0:\PP^1\times \bP_0\dashrightarrow\PP V=\PP^{n-1}
\]
which is a morphism on the open set $\PP^1\times \Sigma^0_0$. For
$x\in \Sigma_0^1$, we can choose homogeneous coordinates $t_0,t_1$
of $\PP^1$ such that $x$ is represented by an $n$-tuple of
homogeneous quadratic polynomials which are all linear combinations
of $t_0^2$ and $t_0t_1$. So $x$ is a strictly semistable point in
$\bP_0$ whose orbit closure intersects with $\Sigma^2_0$. Hence $x$
becomes unstable in $\bP_1$ (see \cite[\S6]{K4}). The proper
transform of $\Sigma^1_0$ does not appear in $\bP_1^s$.

Let $\varphi_1'$ be the composition of $\varphi_0$ and $\id\times
\pi_1:\PP^1\times \bP_1^s\to \PP^1\times \bP_0$. The two base points
of each $x\in \Sigma^2_0$ are distinct by semistability and thus the
locus in $\PP^1\times \bP_1^s$ where $\varphi_1'$ is undefined
consists of two sections over $\bP_1^s$ because $\Sigma^2_0$ is
simply connected. Let $\mu_1:\Gamma_1\to \PP^1\times \bP_1^s$ be the
blow-up along the two sections and let
$\varphi_1=\varphi_1'\circ\mu_1$.  The evaluation map above gives us
a homomorphism $V^*\otimes \cO_{\Gamma_1}\to
\mu_1^*\cO_{\PP^1\times\bP_1^s}(2,1)$ which vanishes simply along
the exceptional divisor $\cE_1$ of $\mu_1$ and hence we obtain a
surjective homomorphism
\[
V^*\otimes
\cO_{\Gamma_1}\twoheadrightarrow\mu_1^*\cO_{\PP^1\times\bP_1^s}(2,1)\otimes
\cO_{\Gamma_1}(-\cE_1).
\]
Therefore, we obtain a diagram
\[
\xymatrix{  \Gamma_1\ar[r]^f\ar[d]_{\pi} & \PP^{n-1}\\
\bP_1^s }
\]
The first map is a flat family of semistable curves and the
restriction of the second map to each fiber of $\pi$ is a degree $2$
map. Further $f$ factors through the contraction of the middle
components in $\Gamma_1$ over the exceptional divisor of $\pi_1$. So
we get a family of stable maps to $\PP^{n-1}$ of degree 2
parameterized by $\bP_1^s$ and thus a morphism $\psi_1:\bP_1^s\to
\mzzntwo$. By construction, $\psi_1$ is $G$-invariant and factors
through a morphism $\bar\psi_1:\bP_1^s/G\to \mzzntwo$. The locus
$\Sigma_0^0$ of no base points is the set of all holomorphic maps
$\PP^1\to \PP^{n-1}$ of degree two and $\bar\psi_1|_{\Sigma_0^0/G}$
is an isomorphism onto the open subset in $\mzzntwo$ of irreducible
stable maps. The complement $\mzzntwo-\bar\psi_1(\Sigma_0^0)$ is the
locus of stable maps of degree two with two irreducible components
and thus $\mzzntwo-\bar\psi_1(\Sigma_0^0)$ is a $(\PP^{n-2}\times
\PP^{n-2})/S_2$ bundle on $\PP^{n-1}$. $\PP^{n-1}$ determines the
image of the intersection point of the two irreducible components
and $(\PP^{n-2}\times \PP^{n-2})/S_2$ determines the pair of lines
in $\PP^{n-1}$ passing through the chosen point.

On the other hand, let $C=(f_j^\lambda=a_jt_0t_1+\lambda
b_jt_0^2+\lambda c_jt_1^2)_{1\le j\le n, \lambda\in \CC}$ represent
a curve in $\bP_0$ passing through $(a_jt_0t_1)_{1\le j\le n}\in
\Sigma^2_0$. Suppose $(a_j)$ is not parallel to $(b_j)$ and $(c_j)$
in $\CC^n$. Then $\varphi_0$ restricts to
$(f_1^\lambda:\cdots:f_n^\lambda):\PP^1\times C\dashrightarrow
\PP^{n-1}$ and $\Gamma_1$ over $C$ is the blow-up of $\PP^1\times C$
along $\{0,\infty\}\times \{ 0\}$. By direct local computation, we
see immediately that the morphism constructed above $\Gamma_1\to
\PP^{n-1}$ at $\lambda=0$ is the map of the tree of three $\PP^1$'s,
whose left (resp. right) component is mapped to the line joining
$(a_j)$ and $(b_j)$ (resp. $(c_j)$), and whose middle component is
mapped to the point $(a_j)$. This proves that $\bar\psi_1$ is
bijective.

\bigskip

Next we study the cohomology ring of $\mzzntwo$. By the isomorphism
$\mzzntwo\cong \bP_1^s/G$, we have an isomorphism in rational
cohomology
\[
H^*(\mzzntwo)\cong H^*_G(\bP_1^s).
\]
From \S\ref{subsectionABK}, the Poincar\'e series of
$H^*_G(\bP_0)=H^*_{SL(2)}(\PP (V\otimes W)^{ss})$ is
\[
P_t^{SL(2)}=\frac1{(1-t^2)(1-t^4)}(1-t^{4n-2}-t^{4n}+t^{6n-2})
\]
and by \eqref{eqcohex1}, we have
\begin{equation}\label{cohtwoeq}
H^*_G(\bP_0)=\QQ [\xi,\alpha^2]/\left\langle
\xi^n\frac{(\xi+2\alpha)^n-(\xi-2\alpha)^n}{2\alpha},
\xi^n\frac{(\xi+2\alpha)^n+(\xi-2\alpha)^n}{2} \right\rangle
\end{equation}
where $\xi$ and $\alpha^2$ are generators of degree 2 and 4
respectively. %The case where $n=1$ is worked out in \cite{K2,K} and
%it is a straightforward exercise to generalize it to higher $n$.

By the blow-up formula \cite[p.605]{GH}, we have
\[
P_t^{SL(2)}(\bP_1)=P_t^{SL(2)}(\bP_0)+\frac1{1-t^4}\frac{1-t^{2n}}{1-t^2}\frac{t^2-t^{4n-4}}{1-t^2}
\]
as $\Sigma^2_0/G=\PP^{n-1}$. To find the ring $H^*_G(\bP_1)$, we need
to compute the normal bundle $N$ to the blow-up center $(\PP^2\times
\PP^{n-1})^{ss}$. Let $K$ and $C$ be respectively the kernel and
cokernel of the composition
\[
\cO^{\oplus n}\twoheadrightarrow \cO(0,1)\hookrightarrow
\cO(1,1)^{\oplus 3}
\]
on $\PP^2\times \PP^{n-1}$, induced from the tautological
homomorphisms $\cO^{\oplus n}\twoheadrightarrow \cO(1)$ on
$\PP^{n-1}$ and $\cO\hookrightarrow\cO(1)^{\oplus 3}$ on $\PP^2$. By
a simple diagram chase with the Euler sequences for the projective
spaces, we see immediately that the normal bundle $N$ to
$\PP^2\times \PP^{n-1}$ in $\PP^{3n-1}$ is the bundle $\Hom (K,C)$.
By definition, the total Chern characters of $K$ and $C$ are
respectively
\[
ch(K)=n-e^{\xi_2} \and
ch(C)=e^{\xi_1+\xi_2+2\alpha}+e^{\xi_1+\xi_2}+e^{\xi_1+\xi_2-2\alpha}-e^{\xi_2}
\]
where $\xi_1=c_1(\cO(1,0))$ and $\xi_2=c_1(\cO(0,1))$. Therefore,
\[
ch(\Hom(K,C))=ch(K^*)ch(C)=(n-e^{-\xi_2})(e^{\xi_1+\xi_2+2\alpha}+e^{\xi_1+\xi_2}+e^{\xi_1+\xi_2-2\alpha}-e^{\xi_2}).
\]
If we restrict to the semistable part $(\PP^2\times \PP^{n-1})^{ss}$
where $\xi_1=0$, we obtain
\[
ch(\Hom(K,C))=e^{2\alpha}(n e^\xi-1)+e^{-2\alpha}(n e^\xi-1)
\]
with $\xi=\xi_2$ and thus the Chern classes are given by
\[
\sum_{k=0}^{2n-2}
t^kc_{2n-2-k}(N)=\frac{(t+2\alpha+\xi)^n-\xi^n}{t+2\alpha}\frac{(t-2\alpha+\xi)^n-\xi^n}{t-2\alpha}
\]
for a formal variable $t$. The restriction of the Poincar\'e dual of
the blow-up center to the exceptional divisor is the constant term
in $t$ of the above polynomial. Hence the difference between the
constant term and the Poincar\'e dual of the blow-up center is a
multiple of $\xi^n$ because $\xi^n$ generates the kernel of the
restriction homomorphism
$$H^*_G(\bP_0)\twoheadrightarrow
H^*_G((\PP^2\times \PP^{n-1})^{ss})\cong\QQ[\xi,\alpha^2]/\langle
\xi^n\rangle.$$ Therefore, by Proposition \ref{eqcohprop1}
\[
H^*_G(\bP_1)=H^*_G(\bP_0)[\rho]/\langle \xi^n\rho,
\frac{(\rho+2\alpha+\xi)^n-\xi^n}{\rho+2\alpha}\frac{(\rho-2\alpha+\xi)^n-\xi^n}{\rho-2\alpha}
+\xi^n q(\rho, \xi, \alpha)\rangle
\]
for some homogeneous polynomial $q(\rho, \xi, \alpha)$ of degree
$n-2$. Now, we subtract out the unstable part in $\bP_1$. By the
recipe of \cite{K4}, we obtain
\[
P_t^{SL(2)}(\bP_1^s)=P_t^{SL(2)}(\bP_1)-\frac1{1-t^2}\frac{1-t^{2n}}{1-t^2}\frac{t^{2n-2}(1-t^{2n-2})}{1-t^2}
\]
\begin{equation}\lab{PPtwo}
=\frac{(1-t^{2n+2})(1-t^{2n})(1-t^{2n-2})}{(1-t^2)^2(1-t^4)}.
\end{equation}
The above normal bundle $N$ splits into the direct sum of two
subbundles $N^+$ and $N^-$ with respect to the weights. Their Chern
classes are the coefficients of the polynomials in $t$
\[
\frac{(t+2\alpha+\xi)^n-\xi^n}{t+2\alpha}\and
\frac{(t-2\alpha+\xi)^n-\xi^n}{t-2\alpha}.
\]
The restrictions, of the normal bundle to the unstable stratum, to
fixed point components are given by the pullbacks of $N^+$ and $N^-$
respectively, tensored with $\cO(1)$.

Using the notation of \S2.3, the image of
$H^{j-2\lambda(\beta)}_G(S_\beta)$ for the unique unstable stratum
$S_\beta$ in $H^*_G(\bP_1)$ is generated by
\begin{equation}\lab{newrel}
\frac{(\rho+2\alpha+\xi)^n-\xi^n}{\rho+2\alpha}+\frac{(\rho-2\alpha+\xi)^n-\xi^n}{\rho-2\alpha},
(\rho+2\alpha+\xi)^n+(\rho-2\alpha+\xi)^n+c\xi^n
\end{equation}
for some $c \in \QQ$, because $\xi^n$ generates the kernel of the
restriction homomorphism
$$H^*_G(\bP_1)\longrightarrow H^*_T(\PP N^+)\oplus H^*_T(\PP N^-)$$
by direct calculation. It is an elementary exercise to check that
the two polynomials in \eqref{newrel} and $\xi^n\rho$ are pairwisely
coprime and hence the Poincar\'e polynomial of the quotient ring of
$\QQ[\xi,\alpha^2,\rho]$ by the ideal generated by the three
polynomials coincides with \eqref{PPtwo}. Therefore, the three
polynomials generate the relation ideal for $H^*_G(\bP_1^s)\cong
H^*(\mzzntwo)$. From the condition that the three polynomials
generate the relations in \eqref{cohtwoeq}, it is easy to deduce
that $c=0$. So we proved
%From this, we can see that two relations
%\[
%\xi^n\frac{(\xi+2\alpha)^n-(\xi-2\alpha)^n}{2\alpha},
%\xi^n\frac{(\xi+2\alpha)^n+(\xi-2\alpha)^n}{2}
%\]
%of $H^*_G(\bP_1^s)$ must be generated by \eqref{newrel} and
%$\xi^n\rho$. So we obtain $c=0$ and
\[
H^*_G(\bP_1^s)\cong \QQ [\xi,\alpha^2,\rho]/\left\langle
\frac{(\rho+2\alpha+\xi)^n-\xi^n}{\rho+2\alpha}+\frac{(\rho-2\alpha+\xi)^n-\xi^n}{\rho-2\alpha},\right.
\]
\[\left.
(\rho+2\alpha+\xi)^n+(\rho-2\alpha+\xi)^n,
\xi^n\rho \right\rangle
\]
as desired.

In \cite{BOH}, Behrend and O'Halloran prove that $H^*(\mzzntwo) =
\QQ[b,t,k]/(G_n)$ where $b, t$ are degree $2$ generators and $k$ is
a degree 4 generator. Here, $G_n$ denotes three polynomials defined
recursively by the matrix equation $G_n = A^{n-1}G_1$, where
\[
A=\left(\begin{array}{ccc}b&0&0\\1&0&k\\0&1&t\end{array}\right),
\qquad G_1 =
\left(\begin{array}{c}b(2b-t)\\2b-t\\2\end{array}\right).
\]
This presentation is equivalent to ours by the following change of
variables
\[
b=\xi, \quad t=2(\xi+\rho),\quad k=4\alpha^2-(\xi+\rho)^2.
\]

Finally, the Picard group of $\bP_0$ is $\ZZ$ and the group
$\mathrm{Pic}(\bP_0)^G$ of equivariant line bundles is a subgroup
because the acting group is $G=SL(2)$. Hence,
$\mathrm{Pic}(\bP_0)^G=\ZZ$. By the blow-up formula of the Picard
group \cite[II,\S8]{Hartshorne}, we obtain
$\mathrm{Pic}(\mzzntwo)\cong \mathrm{Pic}(\bP_1^s)^G=\ZZ^2$ for
$n\ge 3$ because the blow-up center is invariant. The first
isomorphism comes from Kempf's descent lemma \cite{DN}. When $n=2$,
$\mzzntwo$ is $\PP^2$ and hence the Picard group is $\ZZ$.

%%%%%%%%%%%%%%%%%%%%%%%%%%%%%%%%%%%%%%%%%%%%%%%%%%%%%%%%%%
%%%%%%%%%%%%%%%%%%%%%%%%%%%%%%%%%%%%%%%%%%%%%%%%%%%%%%%%%%
\section{A birational transformation}\lab{sectionbirat}

In this section, we study a birational transformation that will be
used in the subsequent section.

Let $V_i=\CC^n$ for $i=1,\cdots,r$ and let $V=\oplus_{i=1}^r V_i$.
For $z\in V$, we write $z=(z_1,\cdots, z_r)$ with $z_i\in V_i$. For
any subset $I\subset \{1,\cdots,r\}$, we let $$\bar\Sigma^I=\{z\in
V\,|\, z_i=0 \text{ for }i\in I \} \and
\bar\Sigma^k_0=\cup_{|I|=k}\bar\Sigma^I$$ for $k\le r$. We will blow
up $V$, $r$ times and then blow down $(r-1)$ times to obtain
$\prod_{i=1}^r\cO_{\PP V_i}(-1)$, a rank $r$ vector bundle on
$(\PP^{n-1})^r$.

The blow-ups are defined inductively as follows. Let $X_0=V$. For
$1\le j\le r$, let $\pi_j:X_j\to X_{j-1}$ be the blow-up of
$\bar\Sigma^{r-j+1}_{j-1}$ and let $\bar\Sigma^k_j$ be the proper
transform of $\bar\Sigma^k_{j-1}$ for $k\ne r-j+1$, while
$\bar\Sigma^{r-j+1}_j$ is the exceptional divisor of $\pi_j$. We
claim then that $X_r$ can be blown down first along
$\bar\Sigma_r^2$, next along the proper transform
$\bar\Sigma^3_{r+1}$ of $\bar\Sigma^3_r$, next along the proper
transform $\bar\Sigma^4_{r+2}$ of $\bar\Sigma^4_r$ and so on until
the blow-down along the proper transform $\bar\Sigma^r_{2r-2}$ of
$\bar\Sigma^r_r$.
\begin{prop}\lab{birprojprop}
After the $r$ blow-ups and $(r-1)$ blow-downs as above, $V$ becomes
$\prod_{i=1}^r\cO_{\PP V_i}(-1)$.
\end{prop}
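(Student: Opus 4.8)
The plan is to first reinterpret the target geometrically. The total space $\prod_{i=1}^r\cO_{\PP V_i}(-1)$ is precisely the iterated fiber product over $V$
\[
\hat V := \mathrm{Bl}_{\bar\Sigma^{\{1\}}}V \times_V \cdots \times_V \mathrm{Bl}_{\bar\Sigma^{\{r\}}}V ,
\]
since blowing up the single linear subspace $\bar\Sigma^{\{i\}}=\{z_i=0\}$ replaces $V=V_i\oplus\bar\Sigma^{\{i\}}$ by the total space of $\cO_{\PP V_i}(-1)$ times $\bar\Sigma^{\{i\}}$, and the fiber product imposes $z_i\in\ell_i$ for a line $\ell_i\in\PP V_i$ over each $i$. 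Because the $\bar\Sigma^{\{i\}}$ are smooth and cross normally (any $\bar\Sigma^{\{i,j\}}$ has the expected codimension $2n$), $\hat V$ is smooth and is equally obtained by blowing up the proper transforms of the $\bar\Sigma^{\{i\}}$ one at a time in any order. The goal is then to produce a morphism $X_r\to\hat V$ and to show that it factors as exactly the $r-1$ blow-downs in the statement. I would organize the whole argument as an induction on $r$, the cases $r=1,2$ being elementary.

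First I would check that the blow-up phase is legitimate and that $X_r$ is smooth. At stage $j$ the center is $\bar\Sigma^{r-j+1}_{j-1}$, the proper transform of $\bar\Sigma^{r-j+1}_0=\cup_{|I|=r-j+1}\bar\Sigma^I$. Two distinct pieces $\bar\Sigma^I,\bar\Sigma^{I'}$ of equal size meet only along $\bar\Sigma^{I\cup I'}$, which has strictly smaller dimension and hence was already blown up at an earlier stage; therefore their proper transforms are disjoint, and the center is a disjoint union of smooth subvarieties. Each piece is smooth because it is a linear space from which one successively removes smaller linear strata sitting inside it transversally. Consequently every $\pi_j$ is the blow-up of a smooth closed center, $X_r$ is a smooth variety, and the exceptional divisors $\bar\Sigma^k_r$ ($1\le k\le r$) cross normally with combinatorics governed by the Boolean lattice of the subsets $I$.

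For the blow-down phase I would exploit the local product structure of the arrangement. Everything is equivariant for $\prod_i GL(V_i)$, and \'etale-locally near the generic point of a stratum $\bar\Sigma^J$ with $|J|=m$ the configuration is a smooth factor times the analogous configuration for the index set $J$ inside $\prod_{i\in J}V_i$; this is what powers the induction, reducing every assertion away from the origin to the cases $<r$. The genuinely new content at level $r$ is the behaviour along the deepest stratum. Concretely, for each $k$ from $2$ to $r$ I would compute the normal bundle of the divisor $\bar\Sigma^k_{r+k-2}$ and show that, after the preceding contractions, it carries the structure of a projective bundle whose normal bundle restricts to $\cO(-1)$ on every fiber; by the Fujiki--Nakano (Castelnuovo) contraction criterion --- the same criterion used for $\bP_3\to\bP_4\to\bP_5$ in the introduction --- this divisor can be blown down while keeping the ambient space smooth, and the contracted fiber direction is exactly the projective factor absent from $\hat V$. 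After all $r-1$ contractions the resulting map $X_{2r-1}\to\hat V$ is an isomorphism: it is proper and birational between smooth varieties and contracts nothing, as one checks fiberwise over $V$ using the inductive identification over $V\setminus\{0\}$ together with the explicit description of the fiber $(\PP^{n-1})^r$ over the origin.

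The main obstacle I anticipate is the uniform normal-bundle computation for the blow-down centers: one must verify, \emph{simultaneously for all} $k$ and in the prescribed order $k=2,3,\dots,r$, that $\bar\Sigma^k$ acquires a projective-bundle structure with fiberwise $\cO(-1)$ normal bundle, and that performing these contractions in this order preserves smoothness at each intermediate stage. Tracking how blowing up the finer strata ($|I|>k$) and then the coarser ones ($|I|<k$) modifies the proper transform of a given $\bar\Sigma^I$ is the delicate bookkeeping here; the local product model reduces it to the transverse ($n=1$, equivalently $r=2$) flop, which is the engine of the induction.
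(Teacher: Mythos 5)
Your reformulation of the target as the fiber product $\mathrm{Bl}_{\bar\Sigma^{\{1\}}}V\times_V\cdots\times_V\mathrm{Bl}_{\bar\Sigma^{\{r\}}}V$ is correct and attractive, and your treatment of the blow-up phase (disjointness and smoothness of the successive centers) is fine. The overall architecture --- induction on $r$, contraction of the divisors $\bar\Sigma^k$ for $k=2,\dots,r$ via a fiberwise $\cO(-1)$ criterion, then Zariski's main theorem --- could in principle be completed. But as written there is a genuine gap, and it sits exactly where you say you anticipate an obstacle: you never establish that $\bar\Sigma^k_{r+k-2}$ is a projective bundle with normal bundle $\cO(-1)$ on its fibers, and the mechanism you propose for doing so does not suffice. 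The \'etale-local product structure near a stratum $\bar\Sigma^J$ with $|J|<r$ controls the tower only over $V\setminus\{0\}$; every divisor $\bar\Sigma^k$ with $k\ge 2$ also has a piece sitting over the origin, inside the first exceptional divisor $\PP V=\PP^{rn-1}$ blown up along the induced arrangement of linear sub-projective spaces, and it is precisely there that the $\PP^{k-1}$-bundle structure and the $\cO(-1)$ condition must be verified before any contraction criterion can be invoked. That locus involves all $r$ summands simultaneously and does not reduce to the ``transverse $r=2$ flop'': the behaviour over the deepest stratum is the genuinely new content at each level of the induction, as you yourself note, so it cannot be delegated to the inductive hypothesis or to a two-factor local model.

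The paper closes exactly this gap with a different device. After the first blow-up, $X_1=\cO_{\PP V}(-1)$ is covered by the open sets $U_i=X_1-\bar\Sigma^{\{i\}}_1$ (these cover because $\cap_{i=1}^r\bar\Sigma^{\{i\}}_1=\emptyset$), and each $U_i$ is identified with $\cO_{\PP V_i}(-1)\oplus\cO_{\PP V_i}(1)^{\oplus(r-1)n}$ fibered over $\PP V_i\cong\PP^{n-1}$. On $U_i$ the remaining $r-1$ blow-ups and the first $r-2$ blow-downs are the $(r-1)$-variable construction performed fiberwise over $\PP V_i$, so the induction hypothesis applies even over the origin; gluing the $U_i$ and checking transition maps identifies $X_{2r-2}$ as the total space of $\cO(-1)$ over $\PP(\oplus_{i=1}^r\cO(-\overrightarrow{e_i}))$ over $(\PP^{n-1})^r$, i.e.\ as the blow-up of $\prod_i\cO_{\PP V_i}(-1)$ along its zero section, and only this last contraction has to be exhibited by hand. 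To complete your argument you would need either this covering trick or an explicit computation of each $\bar\Sigma^k$ over the origin; without one of these, the contraction criterion has nothing to apply to.
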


\begin{proof}
We use induction on $r$. For $r=1$, there is nothing to prove.
Suppose it holds true for $r-1$. We have an open covering
$X_1=\cup_{i=1}^rU_i$ of $X_1=\cO_{\PP V}(-1)$ with
$U_i=X_1-\bar\Sigma^{\{i\}}_1$ where $\bar\Sigma^{\{i\}}_1$ is the
proper transform of $\bar\Sigma^{\{i\}}=\{z\in V\,|\,z_i=0\}$, since
$\cap_{i=1}^r\bar\Sigma^{\{i\}}_1=\emptyset$. Then
$U_i=\cO_{\PP^{n-1}}(-1)\oplus \cO_{\PP^{n-1}}(1)^{\oplus (r-1)n}$.
Over an affine open subset $\CC^{n-1}$ of $\PP^{n-1}$, $U_i$ is
$\CC^{n}\times \oplus_{i=1}^{r-1}\CC^n$ and the blow-ups described
above give us the corresponding blow-ups of
$\oplus_{i=1}^{r-1}\CC^n$. By considering an affine open cover of
$\PP^{n-1}$ and by the induction hypothesis, we see that after
$(r-1)$ more blow-ups and $(r-2)$ blow-downs, $U_i$ becomes
\[
\cO_{(\PP^{n-1})^r}(-1,\overrightarrow{0})\oplus
\bigoplus_{k=1}^{r-1} \cO_{(\PP^{n-1})^r}(1,-\overrightarrow{e_k})
\]
where $\overrightarrow{e_1},\cdots,\overrightarrow{e}_{r-1}$ are
standard basis vectors. Therefore, after $r$ blow-ups and $(r-2)$
blow-downs, $V$ becomes a smooth projective variety $X_{2r-2}$ which
admits an open covering $
\cup_{i=1}^r(\cO_{(\PP^{n-1})^r}(-1,\overrightarrow{0})\oplus
\bigoplus_{k=1}^{r-1}
\cO_{(\PP^{n-1})^r}(1,-\overrightarrow{e_k})).$ By checking the
transition maps at general points, we see that $X_{2r-2}$ is the
total space of $\cO(-1)$ over
$\PP(\oplus_{i=1}^r\cO(-\overrightarrow{e_i}))$ over
$(\PP^{n-1})^r$. Hence, $X_{2r-2}$ is the blow-up of
$\oplus_{i=1}^r\cO(-\overrightarrow{e_i})$ over $(\PP^{n-1})^r$
along the zero section. Hence we can blow down $X_{2r-2}$ further to
obtain
$$\oplus_{i=1}^r\cO_{(\PP^{n-1})^r}(-\overrightarrow{e_i})=\prod_{i=1}^r\cO_{\PP
V_i}(-1)$$ as desired.
\end{proof}

\begin{rema}{\em
More generally, let $E \to X$ be a fiber bundle locally the direct
sum of $r$ vector bundles of rank $n$. Then we can similarly define
$\bar\Sigma_0^k$ and perform $r$ blow-ups and $(r-1)$ blow-downs as
above. Proposition \ref{birprojprop} holds true in this slightly
more general setting by the same proof.}
\end{rema}

%%%%%%%%%%%%%%%%%%%%%%%%%%%%%%%%%%%%%%%%%%%%%%%%%%%%%%%%%%
%%%%%%%%%%%%%%%%%%%%%%%%%%%%%%%%%%%%%%%%%%%%%%%%%%%%%%%%%%
\section{Main construction}
Let $n\ge 2$ and $V=\CC^n$. Let
$W_d=\mathrm{Sym}^d\CC^2=H^0(\PP^1,\cO(d))$ be the space of
homogeneous polynomials of degree $d$ in two variables $t_0,t_1\in
H^0(\PP^1,\cO(1))$. We will frequently drop the subscript $d$ for
convenience. The identity element $\CC \to \Hom(W,W)=W\otimes W^*$
gives us a nontrivial homomorphism
\begin{equation}\lab{eq0}
H^0(\PP^1\times \PP(V\otimes W), \cO(d,1))=W\otimes (V\otimes W)^*
\leftarrow V^*=H^0(\PP V, \cO(1))
\end{equation}
and thus we obtain a rational map
\begin{equation}\lab{eq1}
\PP^1\times\PP(V\otimes W) \dashrightarrow\PP V.
\end{equation}

Let $\mathbf{P}=\PP (V\otimes W_d)\cong \PP^{(d+1)n-1}$. The
irreducible $SL(2)$-representation on $W_d$ induces a linear action
of $SL(2)$ on $\mathbf{P}$ and let $\mathbf{P}^s$ be the stable part
of $\bP$ with respect to this action in Mumford's GIT sense. Then
\eqref{eq1} restricts to a rational map
\begin{equation}\lab{eq2}
\varphi_0:\PP^1\times \bP^s\dashrightarrow\PP V
\end{equation}
which gives us a rational map $\psi_0:\bP^s\dashrightarrow \mzznd$.
Since this map is clearly $SL(2)$-invariant, we obtain a rational
map
\begin{equation}\lab{eq3}
\bar \psi_0: \PP^{(d+1)n-1}\git SL(2)=\bP^s/SL(2)
\dashrightarrow\mzznd .
\end{equation}
In this section, we prove the following.
\begin{theo}\lab{mainconstthm} For $d=3$,
the birational map $\bar\psi_0$ is the composition of three blow-ups
followed by two blow-downs. The blow-up centers are respectively,
$\PP^{n-1}$, $\overline{\mathbf{M}}_{0,2}(\PP^{n-1},1)/S_2$ (where
$S_2$ interchanges the two marked points) and the blow-up of
$\overline{\mathbf{M}}_{0,1}(\PP^{n-1},2)$ along the locus of three
irreducible components. The centers of the blow-downs are
respectively the $S_2$-quotient of a $(\PP^{n-2})^2$-bundle on
$\overline{\mathbf{M}}_{0,2}(\PP^{n-1},1)$ and a
$(\PP^{n-2})^3/S_3$-bundle on $\PP^{n-1}$.
\end{theo}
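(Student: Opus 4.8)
The plan is to mimic the degree-two construction from §\ref{sectiondegtwo}, but now carry out the full sequence of three blow-ups and two blow-downs indicated in the statement, interpreting the intermediate varieties $\bP_1,\dots,\bP_5$ geometrically at each stage. The overarching strategy is to build a family of stable maps over $\bP_3$ by elementary modification, produce the $SL(2)$-invariant morphism $\psi_3:\bP_3\to\mzznthree$, then show $\psi_3$ is constant along the fibers of the two blow-downs $\bP_3\to\bP_4\to\bP_5$, and finally verify that the induced map $\bar\psi_5:\bP_5/SL(2)\to\mzznthree$ is bijective. As explained in the introduction, bijectivity plus normality of $\mzznthree$ yields an isomorphism by the Riemann existence theorem, which closes the argument.

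\textbf{Step 1 (Stratify by base points and perform the blow-ups).} First I would decompose $\bP^s=\PP(V\otimes W_3)^s$ according to the number of common zeros (base points) of the $n$-tuple of cubics, giving strata $\Sigma^0,\Sigma^1,\Sigma^2,\Sigma^3$. The deepest stratum $\Sigma^3$ (three base points) forces all polynomials to be proportional, so $\Sigma^3\cong\PP^{n-1}$ as in the degree-two case; this is the first blow-up center. I expect the second center, the proper transform of $\Sigma^2$ (two base points), to be $\overline{\mathbf{M}}_{0,2}(\PP^{n-1},1)/S_2$: after removing one doubled base point the residue is a degree-one map with two marked preimages, and the $S_2$ reflects the unorderedness of the two base points. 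The third center, the proper transform of $\Sigma^1$, should be the blow-up of $\overline{\mathbf{M}}_{0,1}(\PP^{n-1},2)$ along the three-component locus, since removing one base point leaves a degree-two map with one marked point, and the further blow-up accounts for the most degenerate configuration. Identifying these centers precisely, and checking their smoothness and transversality so that the successive blow-ups $\bP_1,\bP_2,\bP_3$ are all smooth with the claimed stability behavior (via §2.4), is the bookkeeping heart of this step.

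\textbf{Step 2 (Elementary modification and the map $\psi_3$).} As in §\ref{sectiondegtwo}, the evaluation map \eqref{eq0} gives a rational map $\varphi_0:\PP^1\times\bP^s\dashrightarrow\PP V$ defined away from the base points. Over $\bP_3$ I would resolve the indeterminacy by blowing up the (now separated, by the successive blow-ups) base-point sections, obtaining a family $\Gamma\to\bP_3^s$ of genus-zero nodal curves together with a degree-three map to $\PP^{n-1}$; contracting unstable components yields a family of stable maps and hence the $SL(2)$-invariant morphism $\psi_3:\bP_3\to\mzznthree$. The local computation analogous to the degree-two ``tree of three $\PP^1$'s'' identification must be redone for each type of base-point collision to confirm that the limiting stable map is the expected one. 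The key remaining point is the blow-down analysis: I would show that the proper transform of the second exceptional divisor is a $\PP^1$-bundle with fiberwise normal bundle $\cO(-1)$, permitting a blow-down $\bP_3\to\bP_4$, and then that the proper transform of the first exceptional divisor becomes a $\PP^2$-bundle with fiberwise $\cO(-1)$, permitting $\bP_4\to\bP_5$; this is exactly where Proposition \ref{birprojprop} (with its vector-bundle generalization in the Remark) does the work, since the base-point data is governed by the tautological bundles $\prod\cO_{\PP V_i}(-1)$. One then checks $\psi_3$ is constant on the blow-down fibers, so it descends to $\psi_5:\bP_5\to\mzznthree$ and to $\bar\psi_5:\bP_5/SL(2)\to\mzznthree$.

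\textbf{Step 3 (Bijectivity and the blow-down centers).} Finally I would prove $\bar\psi_5$ is bijective by analyzing each boundary stratum of $\mzznthree$ and matching it with a unique $SL(2)$-orbit in $\bP_5$, using that the open stratum $\Sigma^0/SL(2)$ maps isomorphically onto the locus of irreducible stable maps. Tracing the blow-down fibers identifies the two blow-down centers: the image of the second exceptional divisor is the $S_2$-quotient of a $(\PP^{n-2})^2$-bundle on $\overline{\mathbf{M}}_{0,2}(\PP^{n-1},1)$ (the two projective factors recording the directions of the two attached lines), and the image of the first is a $(\PP^{n-2})^3/S_3$-bundle on $\PP^{n-1}$ (three directions through a common point, unordered). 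The hardest part I anticipate is the precise identification of the second blow-up center and its proper transform, together with verifying the $\cO(-1)$ fiberwise normal-bundle conditions that legitimize the two blow-downs; the rest is a careful but routine extension of the degree-two template, with Proposition \ref{birprojprop} providing the essential mechanism converting the iterated base-point blow-ups into the projective-bundle structure needed for the blow-downs.
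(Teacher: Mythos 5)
Your proposal follows essentially the same route as the paper: the same stratification of $\bP^s$ by base points, the same three blow-ups along $\Sigma_0^3$ and the proper transforms of $\Sigma^2$ and $\Sigma^1$, the identification of the centers via the multiplication maps and variation of GIT, the construction of $\psi_3$ by elementary modification, the two blow-downs via Proposition \ref{birprojprop}, and the conclusion from bijectivity, normality and the Riemann existence theorem. The points you flag as requiring the most care (the identification of the second center and the $\cO(-1)$ fiberwise normal-bundle conditions) are exactly where the paper invests its effort, so the plan is sound and matches the published argument.
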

%In the subsequent section, we will compute the cohomology ring of
%$\mzznthree$ using this construction.

%%%%%%%%%%%%%%%%%%%%%%%%%%%%%%%%%%%%%%%%%%%%%%%%%%%%%%%%%%
\subsection{Stratification of $\bP^s$}

An element $\xi\in \bP$ is represented by a choice of $n$ sections
of $H^0(\PP^1,\cO(d))$.  If there is no \emph{base point} (i.e.
common zero) of $\xi$, we get a regular morphism
$\varphi_0|_{\PP^1\times\{\xi\}}$ from $\PP^1$ to $\PP^{n-1}$ of
degree $d$ and thus $\psi_0:\bP^s\dashrightarrow\mzznd$ is well
defined at $\xi$.

Let us focus on the $d=3$ case, from now on. The following is
immediate from the Hilbert-Mumford criterion for stability
\cite{GIT}.
\begin{lemm}\lab{lem1} (1) Semistability coincides with stability, i.e.
$\bP^{ss}=\bP^s$.

(2) $\bP^s$ consists of $\xi\in \bP$ which has no base point of
multiplicity $\ge 2$.
\end{lemm}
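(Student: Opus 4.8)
The plan is to apply the Hilbert--Mumford numerical criterion directly. Since $G=SL(2)$, every nontrivial one-parameter subgroup (1-PS) $\lambda:\CC^*\to SL(2)$ is conjugate to $t\mapsto \diag(t^a,t^{-a})$ with $a>0$, and because $\mu(\xi,\lambda^a)=a\,\mu(\xi,\lambda)$ it suffices to test the primitive diagonal 1-PS in all coordinate systems. So after a change of homogeneous coordinates $t_0,t_1$ I would assume $\lambda(t)=\diag(t,t^{-1})$, and on the monomial basis $t_0^3,t_0^2t_1,t_0t_1^2,t_1^3$ of $W=\mathrm{Sym}^3\CC^2$ the torus $T=\CC^*$ then acts with weights $3,1,-1,-3$ respectively. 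Writing $\xi=(f_1,\dots,f_n)$ with $f_i=a_it_0^3+b_it_0^2t_1+c_it_0t_1^2+d_it_1^3$ and decomposing into $T$-weight spaces, the Mumford weight is $\mu(\xi,\lambda)=-\min\{\,w\mid \text{the weight-}w\text{ component of }\xi\text{ is nonzero}\,\}$, an element of $\{-3,-1,1,3\}$.

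For part (1) the decisive point is that $d=3$ is odd, so $\mathrm{Sym}^3\CC^2$ has no zero weight; indeed every weight lies in $\{3,1,-1,-3\}$, so $\mu(\xi,\lambda)$ is an odd integer and in particular never $0$ for any nontrivial $\lambda$. Semistability requires $\mu\ge 0$ and stability requires $\mu>0$ for all such $\lambda$, and $\mu\ne 0$ forces these to be equivalent; hence $\bP^{ss}=\bP^s$. Equivalently, a strictly semistable point would need $\mu(\xi,\lambda)=0$ for some $\lambda$, which is impossible with only odd weights.

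For part (2) I would read off the stability condition from the sign of $\mu$. With $\lambda$ as above, $\mu(\xi,\lambda)<0$ holds exactly when the minimal nonzero weight is positive, i.e. when the weight $-1$ and weight $-3$ components both vanish; this means $c_i=d_i=0$ for all $i$, so every $f_i=t_0^2(a_it_0+b_it_1)$ is divisible by $t_0^2$, i.e. all $f_i$ vanish to order $\ge 2$ at the point $q=[0:1]$ defined by $t_0=0$. Letting $\lambda$ range over all 1-PS lets $q$ range over all of $\PP^1$, so $\xi$ fails to be stable precisely when some point $q\in\PP^1$ is a common zero of all $f_i$ of order $\ge 2$, that is, a base point of multiplicity $\ge 2$. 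I expect the only real subtlety to be this translation of the sign of $\mu$ into the geometric base-point condition and the bookkeeping of conventions: a base point of multiplicity exactly $1$ leaves some $c_i\ne 0$, whence $\mu>0$ and such a point does not destabilize, which is exactly why only the threshold $\ge 2$ appears. Therefore $\bP^s$ is the locus of $\xi$ having no base point of multiplicity $\ge 2$, proving (2). As a sanity check, no degree-$3$ tuple can be simultaneously divisible by the squares of two distinct linear forms, so at most one destabilizing point exists, but for the criterion only its existence is needed.
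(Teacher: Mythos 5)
Your proof is correct and is exactly the argument the paper has in mind: the paper simply asserts that the lemma is ``immediate from the Hilbert--Mumford criterion'' and gives no further details. Your weight computation --- the weights $3,1,-1,-3$ on $\mathrm{Sym}^3\CC^2$ are all odd, so $\mu(\xi,\lambda)\ne 0$ and semistability coincides with stability, while $\mu(\xi,\lambda)<0$ holds precisely when every $f_i$ is divisible by $t_0^2$, i.e.\ when $[0:1]$ is a base point of multiplicity $\ge 2$ --- is the standard way to make that assertion precise, and it also agrees with the classical case $n=1$ of binary cubics.
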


Let $\bP_0=\bP^s$. We decompose $\bP^s$ by the number of base
points:
\begin{equation}\lab{eq4}
\bP_0=\Sigma^0_0\sqcup \Sigma^1_0\sqcup \cdots \sqcup \Sigma^{d}_0
\end{equation}
where $\Sigma^{k}_0$ is the locus in $\bP_0$ of $\xi$ with $k$ base
points for $k=0,1,\cdots,d$. We put the subscript $0$ is to keep
track of the blow-ups and -downs in what follows. When it is
necessary to specify the degree $d$, we shall write $\bP_j(d)$ for
$\bP_j$ and $\Sigma^{i}_j(d)$ for $\Sigma^i_j$. The rational map
$\varphi_0$ is well-defined on the open set $\PP^1\times
\Sigma_0^{0}$ and thus we have a family of degree $d$ stable maps to
$\PP^{n-1}$.

By Lemma \ref{lem1}, no element of $\bP^s$ admits a base point of
multiplicity $\ge 2$. The following proposition gives us a local
description of the stratification \eqref{eq4}.

\begin{lemm}\lab{lem2}
(1) For $k=1,2,3$, $\Sigma^{4-k}_0$ are locally closed smooth
subvarieties of $\bP_0$ whose $SL(2)$-quotients are respectively,
$\PP^{n-1}$, $\mathbf{M}_{0,2}(\PP^{n-1},1)/S_2$ and
$\mathbf{M}_{0,1}(\PP^{n-1},2)$.

(2) The normal cone of $\Sigma^3_0$ in the closure $\bar\Sigma^2_0$
is a fiber bundle locally the union of three transversal rank $n-1$ subbundles of the
normal bundle $N_{\Sigma^3_0/\bP_0}$.

(3) The normal cone of $\Sigma^3_0$ in the closure $\bar\Sigma^1_0$
is a fiber bundle locally the union of three transversal rank $2(n-1)$ subbundles of the
normal bundle $N_{\Sigma^3_0/\bP_0}$.

(4) The normal cone of $\Sigma^2_0$ in the closure $\bar\Sigma^1_0$
is a fiber bundle locally the union of two transversal rank $n-1$ subbundles of the normal
bundle $N_{\Sigma^2_0/\bP_0}$.
\end{lemm}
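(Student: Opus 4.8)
The plan is to parametrize each stratum by the factorization of $\xi$ into its base point part and a base point free part, and then to reduce the three normal cone statements to one finite dimensional computation inside $W_3/\CC f$.

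\emph{Parametrization and (1).} By Lemma~\ref{lem1} every point of $\bP_0$ has distinct simple base points. If $\xi$ has exactly $b$ base points and $h\in W_b$ is the (unique up to scalar) form vanishing on them, then $\xi=h\cdot\mu$ for a unique $[\mu]\in\PP(V\otimes W_{3-b})\ucirc$, where $(-)\ucirc$ denotes the base point free locus and $h\cdot(-)\colon V\otimes W_{3-b}\to V\otimes W_3$ is multiplication. The assignment $([h],[\mu])\mapsto[h\mu]$ is an $SL(2)$-equivariant isomorphism of $U_b\times\PP(V\otimes W_{3-b})\ucirc$ onto $\Sigma^{b}_0$, where $U_b\subset\PP(W_b)$ is the (open) space of $b$ distinct points of $\PP^1$; in particular $\Sigma^{4-k}_0$ is smooth and locally closed. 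Passing to $SL(2)$-quotients and using that $SL(2)$ acts $3$-transitively on $\PP^1$: for $b=3$ one has $U_3\git SL(2)=\mathrm{pt}$ and $W_0=\CC$, so $\Sigma^3_0\git SL(2)=\PP(V)=\PP^{n-1}$; for $b=2$ the data $(U_2,\PP(V\otimes W_1)\ucirc)$ is an unordered pair of marked points together with a degree $1$ map, giving $\mathbf{M}_{0,2}(\PP^{n-1},1)/S_2$; for $b=1$ the data $(U_1,\PP(V\otimes W_2)\ucirc)=(\PP^1,\{\text{degree }2\text{ maps}\})$ gives $\mathbf{M}_{0,1}(\PP^{n-1},2)$.

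\emph{Normal bundles.} Fix $\xi_0=[c\otimes f]\in\Sigma^3_0$ with $f=\ell_1\ell_2\ell_3$. Writing $T_{\xi_0}\bP_0=(V\otimes W_3)/\CC\xi_0$ and computing $T_{\xi_0}\Sigma^3_0$ as the image of $V\otimes\CC f+\CC c\otimes W_3$ (the first order deformation of $c$ and of $f$), one obtains a canonical isomorphism $N_{\Sigma^3_0/\bP_0}\cong(V/\CC c)\otimes(W_3/\CC f)$ of rank $3(n-1)$. The analogous computation at $\xi_0=[g_0\mu'_0]\in\Sigma^2_0$, $g_0=\ell_1\ell_2$, presents $N_{\Sigma^2_0/\bP_0}$ as the quotient of $V\otimes W_3$ by $g_0(V\otimes W_1)+W_2\cdot\mu'_0$, of rank $2(n-1)$.

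\emph{Branches.} For each cone I would realize the deeper stratum as a limit of one parameter families in the shallower one and read off the leading term. For (2), a family $\xi_t=g_t\mu_t$ in $\Sigma^2_0$ specializing to $\xi_0$ must have $g_t\to\ell_j\ell_k$ for some pair $\{j,k\}$ (it cannot develop a double root, as $f$ is square free) and $\mu_t\to c\otimes\ell_l$; there are $\binom{3}{2}=3$ branches. Expanding $\xi_t$, the part lying in $\CC c\otimes W_3$ dies in $N_{\Sigma^3_0/\bP_0}$, and letting the first order term $\nu\in V\otimes W_1$ vary, branch $\{j,k\}$ sweeps out $(V/\CC c)\otimes L_{jk}$ with $L_{jk}=(\ell_j\ell_k W_1)/\CC f$ of dimension $1$, i.e. rank $n-1$. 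For (3), the same analysis with $\xi=\ell\cdot\mu$, $\mu\in V\otimes W_2$, gives $\binom{3}{1}=3$ branches $(V/\CC c)\otimes M_j$ with $M_j=(\ell_j W_2)/\CC f$ of dimension $2$, i.e. rank $2(n-1)$. For (4), a family in $\Sigma^1_0$ specializing to $\xi_0\in\Sigma^2_0$ has $\ell_t\to\ell_i$ for $i\in\{1,2\}$, yielding $\binom{2}{1}=2$ branches whose leading directions, read modulo $T\Sigma^2_0$, are two rank $n-1$ subbundles of $N_{\Sigma^2_0/\bP_0}$.

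\emph{Transversality, and the main point.} Completeness of these lists of branches follows from upper semicontinuity of the base locus together with the square freeness of the limiting cubic, which forces $g_t$ (resp. $\ell_t$) to converge to an honest partial product; combined with the fact that the cone has fibre dimension $n-1$ in each case, this shows the cone is exactly the asserted union. The transversality then reduces to a general position check in $W_3/\CC f$: for (2) that $L_{12},L_{13},L_{23}$ are linearly independent, hence a basis of the $3$-dimensional space $W_3/\CC f$; for (3) that $M_1,M_2,M_3$ meet pairwise in the lines $L_{jk}$ and have trivial common intersection (since $\ell_1W_2\cap\ell_2W_2\cap\ell_3W_2=\CC f$); for (4) that the two rank $n-1$ subbundles are complementary. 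I expect this linear algebra core, the general position of the subspaces of $W_3/\CC f$ cut out by the partial products of the $\ell_i$, to be the only real obstacle; it is a direct computation (e.g. with $\ell_1=t_1,\ \ell_2=t_1-t_0,\ \ell_3=t_0$), and its globalization over $\Sigma^3_0$ (resp. $\Sigma^2_0$) is automatic since every construction above is natural in $(c,f)$ (resp. $(g_0,\mu'_0)$).
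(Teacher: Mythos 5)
Your proposal is correct and follows essentially the same route as the paper: both parametrize the strata via the multiplication maps $\Phi^k:[\PP^{k}\times \PP(V\otimes W_{4-k})]^s\to\bP_0$ (your factorization $\xi=h\mu$), identify the quotients with the open moduli spaces, and derive the normal cones from the fact that these maps are immersions, three-to-one (resp.\ two-to-one) over the deeper stratum. You additionally write out the ``local computation'' that the paper only asserts --- the identification $N_{\Sigma^3_0/\bP_0}\cong (V/\CC c)\otimes (W_3/\CC f)$ and the general-position check for the subspaces $L_{jk}$, $M_j$ --- which is a useful supplement; the only slip is the remark that the cone has fibre dimension $n-1$ ``in each case,'' whereas in case (3) it is $2(n-1)$, consistent with your own rank count.
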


The deepest stratum is easy to describe. In order to have three base
points, $\xi$ must be represented by a rank 1 homomorphism in
$\PP(V\otimes W)=\PP\Hom(V^*,W)$. Since any rank 1 homomorphism
factors through $\CC$, the locus of rank 1 homomorphisms in
$\PP\Hom(V^*,W)$ is $\PP V\times \PP W$ and hence
$\Sigma^3_0=[\PP^3\times \PP^{n-1}]^s$. Since the GIT quotient $\PP
W\git SL(2)$ is just a point,
\begin{equation}
\Sigma^3_0\git SL(2)\cong \PP^{n-1}.
\end{equation}
It is important to remember that the stabilizer in
$PGL(2)=SL(2)/\{\pm 1\}$ of a point $\xi$ in $\Sigma^3_0$ is the
symmetric group $S_3$. This group will act nontrivially on the
exceptional divisors after blow-ups.

For $\Sigma^1_0$ and $\Sigma^2_0$, we consider the multiplication
morphism
\begin{equation}\lab{eq6}
\Phi^k:\PP^{k}\times \PP (\CC^n\otimes \CC^{4-k})\lra
\PP(\CC^n\otimes \CC^4)
\end{equation}
defined by $\Phi^k\left(
(a_0:\cdots:a_{k}),(b_{i,0}:\cdots:b_{i,3-k})_{1\le i\le
n}\right)\to (c_{i,j})_{1\le i\le n, 0\le j\le 3}$ where
$c_{i,j}=\sum_{l=0}^j a_lb_{i,j-l}$. Here $a_l=0$, $b_{i,j}=0$
unless $0\le l\le k$, $0\le j\le 3-k$. Then for $k\le l$, we have
$(\Phi^k)^{-1}(\Sigma^l_0(3))=[\PP^{k}\times \Sigma^{l-k}_0(3-k)]^s$
where the superscript $s$ denotes the stable part with respect to
the $SL(2)$-action on $\cO(1,1)$. Furthermore, $\Phi^k$ maps
$[\PP^{k}\times \Sigma^{0}_0 (3-k)]^s$ bijectively onto
$\Sigma^{k}_0(3)=\Sigma^k_0$. It is easy to see that the tangent map
of $\Phi^k$ is injective over the open locus of distinct (or no)
base points in $\PP (\CC^n\otimes \CC^4)$, and hence we obtain an
isomorphism $[\PP^{k}\times \Sigma^{0}_0 (3-k)]^s\cong \Sigma^{k}_0$
for $k=1,2$. Therefore
$$\Sigma^{k}_0/SL(2)=[\PP^{k}\times \Sigma^{0}_0 (3-k)]\git
SL(2)\cong \bM_{0,k}(\PP^{n-1},3-k)/S_{k}$$ because $\Sigma^{0}_0
(3-k)$ is the space of holomorphic maps $\PP^1\to \PP^{n-1}$ of
degree $3-k$.

Next, let us determine the normal cones
$C_{\Sigma^3_0/\bar\Sigma_0^1}$, $C_{\Sigma^3_0/\bar\Sigma_0^2}$ and
$C_{\Sigma^2_0/\bar\Sigma_0^1}$. %The tautological homomorphisms
%$\cO\to \cO(1)^4$ on $\PP^{3}$ and $\cO^{\oplus n}\to \cO(1)$ on
%$\PP^{n-1}$, induce
%\[
%\phi: \cO^{\oplus n}\lra \cO(1,0)\lra \cO(1,1)^{\oplus 4} \] on
%$\Sigma_0^3=[\PP^3\times \PP^{n-1}]^s$. It is straightforward to see
%that the normal bundle $N_{\Sigma_0^3/\bP_0}$ of $\Sigma_0^3$ is
%$\Hom(K_\phi, C_\phi)$ where $K_\phi$ and $C_\phi$ are respectively
%the kernel and cokernel bundles of $\phi$. For each $\xi\in
%\Sigma_0^3$, the base points of $\xi$ are distinct by stability. By
%direct local computation,
As observed above, $\Phi^2$ and $\Phi^1$ are immersions into $\bP_0$
and they are clearly three to one over $\Sigma^3_0$. Since $\Phi^k$
are immersions, we see by local computation that the directions of
the locus $\Sigma_0^2$ of two base points in the normal bundle
$N_{\Sigma_0^3/\bP_0}$ consists of a fiber bundle locally the union
of three transversal subbundles of rank $n-1$ and that the
directions of the locus $\Sigma_0^1$ of one base point consists of a
fiber bundle locally the union of three transversal subbundles of
rank $2n-2$ whose mutual intersections are precisely the rank $n-1$
subbundles for $\Sigma_0^2$. Similarly by local computation, we
obtain that the normal cone to $\Sigma_0^2$ in $\bar\Sigma_0^1$
consists of a fiber bundle locally the union of two transversal
subbundles of rank $n-1$ of the normal bundle
$N_{\Sigma_0^2/\bP_0}$. %There might be nontrivial holonomy
%on the normal cones so these may not be unions of vector bundles exactly.

%%%%%%%%%%%%%%%%%%%%%%%%%%%%%%%%%%%%%%%%%%%%%%%%%%%%%%%%%%
\subsection{Blow-ups}

As in the previous subsection, we let $\bP_0$ be the stable part of
$\PP (V\otimes W)$ and let $\bar\Sigma_0^k$ be the closure of
$\Sigma_0^k$ in $\bP_0$. Let $\bP_1$ be the blow-up of $\bP_0$ along
the smooth subvariety $\Sigma_0^3$ and let $\bar\Sigma_1^k$ be the
proper transform of $\bar\Sigma_0^k$ for $k=1,2$. Let
$\bar\Sigma_1^3$ be the exceptional divisor of the blow-up. Then by
Lemma \ref{lem2}, $\bar\Sigma_1^2$ is a smooth subvariety of $\bP_1$
and in fact $\bar\Sigma_1^2$ is the line bundle $\cO(-1)$ over the
disjoint union of three projective bundles on $\Sigma_0^3$ in a
neighborhood of $\bar\Sigma_1^3\cap \bar\Sigma_1^2$. Furthermore,
$\bar\Sigma^1_1$ about $\bar\Sigma^1_1\cap \bar\Sigma_1^3$ is the
union of three projective subbundles of $\PP N_{\Sigma_0^3/\bP_0}$
of fiber dimension $2n-3$ whose mutual intersections are the three
projective bundles $\bar\Sigma_1^3\cap \bar\Sigma_1^2$.

Next we blow up $\bP_1$  along the smooth subvariety
$\bar\Sigma^2_1$ and obtain a variety $\bP_2$. Let $\bar\Sigma_2^k$
be the proper transform of $\bar\Sigma_1^k$ for $k=1,3$ and let
$\bar\Sigma_2^2$ be the exceptional divisor of the blow-up. Then by
Lemma \ref{lem2} again, $\bar\Sigma_2^1$ is a smooth subvariety of
$\bP_2$.

Let $\bP_3$ be the blow-up of $\bP_2$ along the smooth subvariety
$\bar\Sigma^1_2$ and let $\bar\Sigma_3^k$ be the proper transform of
$\bar\Sigma_2^k$ for $k\ne 1$. Let $\bar\Sigma_3^1$ be the
exceptional divisor of the blow-up.

By \cite[Lemma 3.11]{K4}, blow-up commutes with quotient. Therefore,
$\bP_3$ is a smooth quasi-projective variety whose quotient by the
induced $SL(2)$ action is the blow-up of $\PP(V\otimes W)\git SL(2)$
along $\Sigma_0^3/SL(2)=\PP^{n-1}$, $\bar\Sigma_1^2/SL(2)$ and
$\bar\Sigma_2^1/SL(2)$.

\begin{lemm}\lab{lem3}
(1) $\bar\Sigma_1^2/SL(2)$  is the blow-up of the GIT quotient
$\PP^2\times \PP(\CC^n\otimes \CC^2)\git_{(1,1)} SL(2)$ with respect
to the linearization $\cO(1,1)$, along $\PP^2\times \PP^1\times
\PP^{n-1}\git_{(1,1,1)}SL(2)$ with respect to the linearization
$\cO(1,1,1)$.

(2) $\bar\Sigma_2^1/SL(2)$ is the blow-up of $\PP^1\times
\PP(\CC^n\otimes \CC^3)\git _{(1,1)}SL(2)$, first along
$\PP^1\times\PP^2\times \PP^{n-1}\git_{(1,1,1)}SL(2)$ and second
along the proper transform of $\PP^1\times \PP^1\times
\PP(\CC^n\otimes \CC^2)\git_{(1,1,1)}SL(2)$.
\end{lemm}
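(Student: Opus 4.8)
The plan is to exploit the multiplication morphisms $\Phi^1$ and $\Phi^2$ of \eqref{eq6}, which are $SL(2)$-equivariant immersions whose images are the stratum closures $\bar\Sigma_0^1$ and $\bar\Sigma_0^2$. The key observation is that each $\Phi^k$ is generically injective onto $\Sigma_0^k$ but becomes multivalued (three-to-one, in the relevant cases) over the deeper strata, and that blowing up those deeper strata in $\bP_0$ is precisely what separates the colliding sheets. After the separation, the proper transform of the stratum closure should become isomorphic to an explicit blow-up of the \emph{domain} of $\Phi^k$ along the preimages of the deeper strata. Passing to $SL(2)$-quotients and invoking that blow-up commutes with the quotient \cite[Lemma 3.11]{K4} then delivers the two asserted descriptions.

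For part (1), I would first read off the domain $\PP^2\times\PP(\CC^n\otimes\CC^2)$ of $\Phi^2$: here $\PP^2=\PP(\mathrm{Sym}^2\CC^2)$ supplies the two base points and $\PP(\CC^n\otimes\CC^2)=\PP(V\otimes W_1)$ the residual degree-one map. The preimage $(\Phi^2)^{-1}(\Sigma_0^3)$ is exactly the locus where the residual map degenerates to rank one, i.e. the Segre locus $[\PP^2\times\PP^1\times\PP^{n-1}]^s$, over which $\Phi^2$ is the three-to-one cover. By Lemma \ref{lem2}(2) the normal cone of $\Sigma_0^3$ in $\bar\Sigma_0^2$ is locally the union of three transversal rank $n-1$ subbundles of $N_{\Sigma_0^3/\bP_0}$; this transversality is exactly what guarantees that blowing up $\Sigma_0^3$ pulls the three sheets apart, so that $\bar\Sigma_1^2$ is isomorphic to the blow-up of $[\PP^2\times\PP(\CC^n\otimes\CC^2)]^s$ along $[\PP^2\times\PP^1\times\PP^{n-1}]^s$. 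Taking $SL(2)$-quotients with the linearizations $\cO(1,1)$ and $\cO(1,1,1)$ and using that blow-up commutes with the quotient then yields (1).

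Part (2) runs along the same lines with $\Phi^1$ on the domain $\PP^1\times\PP(\CC^n\otimes\CC^3)$, where $\PP^1=\PP(W_1)$ supplies one base point and $\PP(\CC^n\otimes\CC^3)=\PP(V\otimes W_2)$ the residual degree-two map. Since $\bar\Sigma_0^1$ contains both $\bar\Sigma_0^2$ and $\Sigma_0^3$, there are two successive blow-ups to match. The preimage $(\Phi^1)^{-1}(\Sigma_0^3)$ is $[\PP^1\times\PP^2\times\PP^{n-1}]^s$ (the residual degree-two map collapsing to rank one), while $(\Phi^1)^{-1}(\bar\Sigma_0^2)$ is the locus where the residual degree-two map acquires a base point; by the degree-two analysis of \S\ref{sectiondegtwo} this latter locus is the image of $\PP^1\times\PP^1\times\PP(\CC^n\otimes\CC^2)$ under $\id\times\Phi^1_{(2)}$. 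Using the transversality recorded in Lemma \ref{lem2}(3),(4), I would match the blow-up of $\bP_0$ along $\Sigma_0^3$ followed by $\bar\Sigma_1^2$ with the blow-up of $[\PP^1\times\PP(\CC^n\otimes\CC^3)]^s$ first along $[\PP^1\times\PP^2\times\PP^{n-1}]^s$ and then along the proper transform of $[\PP^1\times\PP^1\times\PP(\CC^n\otimes\CC^2)]^s$, and pass to the $SL(2)$-quotient as before.

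The hard part will be the local analysis showing that the proper transform of each stratum closure is \emph{literally} the blow-up of the $\Phi^k$-domain, not merely birational to it: one must verify, in coordinates adapted to the factorization into factor times residual map, that the ideal of the deeper stratum pulls back under $\Phi^k$ to the ideal of its preimage and that the separated sheets are smooth and pairwise disjoint along the exceptional locus. The transversality of the normal-cone subbundles in Lemma \ref{lem2} is exactly the input that makes this separation clean, and it reduces the whole assertion to the étale-local model of several smooth branches meeting transversally. The remaining bookkeeping — the $SL(2)$-equivariance of $\Phi^k$ and the compatibility of the $\cO(1,1)$ and $\cO(1,1,1)$ linearizations under the quotient — is routine given \cite[Lemma 3.11]{K4}.
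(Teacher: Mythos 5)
Your proposal is correct and follows essentially the same route as the paper: both identify $\bar\Sigma^k_{3-k}$ with an explicit blow-up of the domain of $\Phi^k$ along the preimages of the deeper strata, using the transversality recorded in Lemma \ref{lem2}, and then pass to $SL(2)$-quotients via \cite[Lemma 3.11]{K4}. The only difference is that the paper shortcuts your ``hard part'': since the target $\bar\Sigma^k_{3-k}$ is smooth, it suffices to check that the induced map $\Phi^k_{3-k}$ is bijective, which is done by comparing fibers over $\Sigma_0^3$ (e.g.\ three disjoint copies of $\PP^{n-2}$ on each side for $k=2$) rather than by verifying the ideal-theoretic statement in adapted local coordinates.
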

\begin{proof}
We claim that for $k=1,2$, the morphism $$\Phi^k:[\PP^{k}\times
\PP(\CC^n\otimes \CC^{4-k})]^s\to \bar\Sigma^k_0\subset\bP_0$$
becomes an equivariant isomorphism after $3-k$ blow-ups. Since
$\bar\Sigma^k_{3-k}$ for $k=1,2$ are smooth, it suffices to show
that the induced map
\[
\Phi^k_{3-k}:[\PP^{k}\times \PP(\CC^n\otimes \CC^{4-k})]^s_{3-k}\to
\bar\Sigma^k_{3-k}\subset\bP_{3-k}
\]
is bijective where $[\PP^{k}\times \PP(\CC^n\otimes
\CC^{4-k})]^s_{3-k}$ denotes the result of $3-k$ blow-ups along
$[\PP^{k}\times \bar\Sigma^{3-k-i}_{i}(3-k)]^s$ for
$i=0,\cdots,3-k-1$. By Lemma \ref{lem2}, this is a straightforward
exercise. For instance, when $k=2$, the fiber of
$\pi_1:\bar\Sigma_1^2\to \bar\Sigma_0^2$ over a point in
$\Sigma_0^3$ is the disjoint union of three copies of $\PP^{n-2}$
which is the same as the fiber of
$\pi_1\circ\Phi^2_1=\Phi^2\circ\tilde{\pi}_1$ where $\tilde{\pi}_1$
is the blow-up map of $[\PP^2\times \PP (\CC^n\otimes \CC^2)]^s$
along the smooth subvariety $[\PP^2\times \Sigma^1_0(1)]^s$ of
codimension $n-1$, since $\Phi^2$ is three to one over $\Sigma_0^3$.

By the above claim,  $\bar\Sigma_1^2/SL(2)$ is isomorphic to the
blow-up of the GIT quotient $\PP^2\times \PP(\CC^n\otimes
\CC^2)\git_{(1,1)} SL(2)$ with respect to the linearization
$\cO(1,1)$, along $\PP^2\times \PP^1\times
\PP^{n-1}\git_{(1,1,1)}SL(2)$ with respect to the linearizaiton
$\cO(1,1,1)$. Similarly, we obtain (2).
\end{proof}

\begin{coro}\lab{stratadesc}
(1) $\bar\Sigma_1^2/SL(2)$  is
$\overline{\mathbf{M}}_{0,2}(\PP^{n-1},1)/S_2$. In other words, it
is $\PP \mathrm{Sym}^2(\cU)$ over the Grassmannian $Gr(2,n)$ where
$\cU$ is the universal rank $2$ bundle. In particular, the
Poincar\'e polynomial of $\bar\Sigma_1^2/SL(2)$  is that of
$\PP^2\times Gr(2,n)$.

(2) $\bar\Sigma_2^1/SL(2)$ is the blow-up of
$\overline{\mathbf{M}}_{0,1}(\PP^{n-1},2)$ along the locus of three
irreducible components. This locus is the $S_2$-quotient of the
fiber product $\PP (\cO^{\oplus
n}/\cO(-1))\times_{\PP^{n-1}}\PP(\cO^{\oplus n}/\cO(-1))$. In
particular, the Poincar\'e polynomial of $\bar\Sigma_2^1/SL(2)$ is
that of $\PP^1\times \PP^{n-1}\times \PP^{n-1}\times \PP^{n-2}$.
\end{coro}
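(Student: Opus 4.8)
The plan is to derive both statements from the geometric-invariant-theory descriptions in Lemma~\ref{lem3} by identifying the GIT quotients that occur there with explicit bundles over $Gr(2,n)$ and $\PP^{n-1}$, and then reading off the blow-ups. The three inputs I would rely on are the elementary computation $\PP(\CC^n\otimes\CC^2)\git SL(2)\cong Gr(2,n)$, the degree-two Theorem~\ref{r2th}, and the local stratum descriptions of Lemma~\ref{lem2}, together with the fiberwise identity $\mathrm{Sym}^2\PP^1=\PP^2$ that converts the $S_2$-quotients into projective bundles.

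For part (1), I would first show $\PP(\CC^n\otimes\CC^2)\git SL(2)\cong Gr(2,n)$: a point is stable exactly when the corresponding $2\times n$ matrix has rank $2$, the induced $PGL(2)$-action is free with quotient $Gr(2,n)$, and the residual stabilizer $\{\pm I\}$ acts trivially on $W_2=\mathrm{Sym}^2\CC^2$ (the weights being even). Consequently, over the rank-$2$ locus the quotient $\PP(W_2)\times\PP(\CC^n\otimes\CC^2)\git SL(2)$ of Lemma~\ref{lem3}(1) is the associated $\PP^2$-bundle $\mathrm{Sym}^2\PP\cU=\PP\mathrm{Sym}^2\cU$ over $Gr(2,n)$ (using $\cU^*\cong\cU\otimes(\det\cU)^{-1}$ to see $\PP\mathrm{Sym}^2\cU^*\cong\PP\mathrm{Sym}^2\cU$). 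I would then analyze the rank-$1$ locus — where $\Phi^2$ is three-to-one and an extra base point appears — and check, using Lemma~\ref{lem2}(2) and the blow-up of Lemma~\ref{lem3}(1), that blowing up along $\PP(W_2)\times\PP(W_1)\times\PP^{n-1}\git SL(2)$ separates the three sheets and extends the $\PP^2$-bundle across the boundary, so that $\bar\Sigma_1^2/SL(2)$ is globally $\PP\mathrm{Sym}^2\cU$. The moduli interpretation is then immediate: a degree-$1$ stable map with two marked points is a line $U\in Gr(2,n)$ together with an ordered pair of points on $\PP\cU$, so $\overline{\mathbf{M}}_{0,2}(\PP^{n-1},1)\cong\PP\cU\times_{Gr(2,n)}\PP\cU$ and its $S_2$-quotient has fiber $\mathrm{Sym}^2\PP^1=\PP^2$, i.e. equals $\PP\mathrm{Sym}^2\cU$. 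The Poincar\'e polynomial then follows since a $\PP^2$-bundle is rationally a product, giving $P_t(\PP^2)P_t(Gr(2,n))=P_t(\PP^2\times Gr(2,n))$.

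For part (2), I would read the base of Lemma~\ref{lem3}(2) through the degree-two theorem. Ignoring the marked-point factor, $\PP(\CC^n\otimes W_2)\git SL(2)$ blown up along $\PP(W_2)\times\PP^{n-1}\git SL(2)=\PP^{n-1}$ is exactly $\overline{\mathbf{M}}_{0,0}(\PP^{n-1},2)$ by Theorem~\ref{r2th}; restoring the diagonal factor $\PP^1=\PP W_1$ (the base point, i.e. the marked point on the domain) turns the construction into the universal curve, hence into $\overline{\mathbf{M}}_{0,1}(\PP^{n-1},2)$. It then remains to match the second blow-up, along the proper transform of $\PP^1\times\PP^1\times\PP(\CC^n\otimes W_1)\git SL(2)$, with the blow-up along the locus of three irreducible components. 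I would identify that locus by the bubble analysis: a degree-$2$ stable map with three components consists of a contracted central component carrying the marked point, attached to two degree-$1$ components, so the data is a point $P\in\PP^{n-1}$ together with an unordered pair of lines through $P$. Since lines through $P$ form the fiber of $\PP(\cO^{\oplus n}/\cO(-1))$ over $\PP^{n-1}$, this locus is $[\PP(\cO^{\oplus n}/\cO(-1))\times_{\PP^{n-1}}\PP(\cO^{\oplus n}/\cO(-1))]/S_2$, as claimed. The Poincar\'e polynomial is then computed from the blow-up formula of \S\ref{cohringblowup}, using the known series of $\overline{\mathbf{M}}_{0,1}(\PP^{n-1},2)$ and of this center, and equals that of $\PP^1\times\PP^{n-1}\times\PP^{n-1}\times\PP^{n-2}$.

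The main obstacle I anticipate is part (2): carefully tracking the two iterated GIT blow-ups through the moduli dictionary — in particular verifying that after the first blow-up the proper transform of $\PP^1\times\PP^1\times\PP(\CC^n\otimes W_1)\git SL(2)$ is precisely the three-component boundary stratum (with the correct $S_2$-symmetry and normal directions), and handling the non-product $\cO(1,1)$-stability near the rank-$1$/colliding-base-point loci in both parts. The fiberwise symmetric-product computations $\PP^1\times\PP^1/S_2=\PP^2$ and ``two lines through a point'' are the glue that makes the GIT and moduli pictures agree.
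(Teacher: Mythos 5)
Your overall strategy --- start from Lemma \ref{lem3}, identify the GIT quotients there with bundles over $Gr(2,n)$ and with moduli spaces, and match the blow-up centers --- is the same as the paper's, and your moduli-theoretic identifications ($\overline{\bM}_{0,2}(\PP^{n-1},1)/S_2=\PP\mathrm{Sym}^2\cU$; the three-component locus as the $S_2$-quotient of the fiber product of two copies of $\PP(\cO^{\oplus n}/\cO(-1))$) are correct. But the step that actually does the work is missing in both parts, and in part (1) your starting point is literally false. For the linearization $\cO(1,1)$ on $\PP W_2\times \PP(\CC^n\otimes\CC^2)$, every point $(q,A)$ with $q$ a perfect square is unstable: for the one-parameter subgroup adapted to the double root, the Hilbert--Mumford weight is $(1-2)a<0$ even when $A$ has rank $2$. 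Hence over the rank-two locus of $Gr(2,n)$ the $(1,1)$-quotient has fiber $\PP^2$ minus the conic of squares, not $\PP^2$; it is compactified instead by orbits with $\mathrm{rank}(A)=1$. The correct mechanism, which the paper takes from \S\ref{ssvarGIT}, is variation of GIT: the quotient $\PP^2\times\PP(\CC^n\otimes\CC^2)\git_{(1,m)}SL(2)$ for $m>2$ \emph{is} $\PP\mathrm{Sym}^2\cU$, there is a single wall at $m=2$, and because the negative-weight part of the normal bundle at the wall has rank one the flip degenerates to a pure blow-up of the $(1,1)$-quotient along $\PP^2\times\PP^1\times\PP^{n-1}\git SL(2)$. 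Your plan to ``extend the $\PP^2$-bundle across the boundary'' after the blow-up does not supply this; the blow-up is needed precisely because the bundle structure already fails on the $(1,1)$ side away from the boundary.

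The same issue is fatal to part (2) as written. With a product-type linearization in which stability is governed by the $\PP(\CC^n\otimes\CC^3)$ factor, the quotient of $\PP^1\times\bP_1^s(2)$ is a $\PP^1$-\emph{bundle} over $\mzzntwo$: over a boundary point the fiber is still an irreducible $\PP^1$ (the stabilizer being finite), so ``restoring the diagonal factor'' does not by itself produce the universal curve, whose boundary fibers are nodal. The paper obtains $\overline{\bM}_{0,1}(\PP^{n-1},2)$ on the far side of a wall at $m_0=1/2\epsilon$, and the flip there is genuinely a blow-up (the family $\Gamma_1\to\PP^1\times\bP_1^s(2)$ of \S\ref{sectiondegtwo}) \emph{followed by a blow-down} contracting the middle components over $\Sigma_1^2(2)$; only after this identification does the second blow-up center of Lemma \ref{lem3}(2) become the locus of three irreducible components. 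You flag exactly this as ``the main obstacle,'' but you offer no mechanism to overcome it, and without the wall-crossing analysis the identification of the first blow-up with the universal curve --- and hence all of part (2) --- remains unproved.
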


\begin{proof}
(1) Note that $\PP(\CC^n\otimes \CC^2)\git SL(2)$ is the
Grassmannian $Gr(2,n)$ and the quotient $\PP^2\times
\PP(\CC^n\otimes \CC^2)\git_{(1,m)} SL(2)$ with respect to the
linearization $\cO(1,m)$ for $m>2$ is $\PP \mathrm{Sym}^2(\cU)$
over the Grassmannian $Gr(2,n)$ where $\cU$ is the universal rank 2
bundle. We study the variation of GIT quotients as we vary our
linearization from  $\cO(1,1)$ to $\cO(1,m)$ with $m>2$. (See
\cite{Tha,DH}.) There is only one wall at $m=2$ and the flip
consists of only the blow-up along $\PP^2\times \PP^1\times
\PP^{n-1}\git_{(1,1,1)}SL(2)$ with respect to the linearizaiton
$\cO(1,1,1)$. See \S\ref{ssvarGIT}.

(2) By the degree two case in \S\ref{sectiondegtwo}, we have
$$\mathrm{bl}_{\Sigma^2_0(2)}\PP(\CC^n\otimes \CC^3)\git_{1^\epsilon} SL(2)\cong \mzzntwo$$
where the linearization is $\pi_1^*\cO(1)\otimes \cO(-\epsilon E)=:
\cO(1^\epsilon)$ with $E$ the exceptional divisor of the blow-up
$\pi_1$ of $\PP(\CC^n\otimes \CC^3)$. Let
$$\bP_1(2)=\mathrm{bl}_{\Sigma^2_0(2)}\PP(\CC^n\otimes \CC^3)^s$$
and $\bP_1^s(2)$ be its stable part. Then $\PP^1\times
\bP_1^s(2)\git_{(1, m\cdot 1^\epsilon)} SL(2)$ for $m>>0$ is a
$\PP^1$-bundle over $\overline{\bM}_{0,0}(\PP^{n-1},2)$. There is
only one wall at $m_0=1/2\epsilon$ as we vary the linearization from
$(1,1^\epsilon)$ to $(1,m\cdot 1^\epsilon)$ with $m>>0$. It is
straightforward to check that the flip at $m_0$ is the composition
of a blow-up and a blow-down as follows: the blow-up is precisely
the quotient of the blow-up
$$\mu_1:\Gamma_1\lra \PP^1\times \bP_1^s(2)$$ in \S3 while the blow-down
contracts the middle components of the curves in $\Gamma_1$ lying
over $\Sigma_1^2(2)$. The result of the blow-down is obviously the
universal curve $\overline{\mathbf{M}}_{0,1}(\PP^{n-1},2)$ over
$\overline{\mathbf{M}}_{0,0}(\PP^{n-1},2)$.
\end{proof}

%%%%%%%%%%%%%%%%%%%%%%%%%%%%%%%%%%%%%%%%%%%%%%%%%%%%%%%%%%
\subsection{Blow-downs}
Now we show that $\bP_3$ can be blown down twice. First, note that
locally the normal bundle to $\bar\Sigma_1^2$ is the direct sum of two
vector bundles, say $V_1\oplus V_2$, and the normal cone in
$\bar\Sigma_1^1$ is $V_1\cup V_2$. Hence the proper transform of
$\bar\Sigma_1^2$ in $\bP_3$ is the blow-up of $\PP(V_1\oplus V_2)$
along $\PP V_1\sqcup \PP V_2$, which is a $\PP^1$-bundle on $\PP
V_1\times_{\bar\Sigma_1^2}\PP V_2$. From \S\ref{sectionbirat}, we
can blow down along this $\PP^1$-bundle to obtain a complex manifold
$\bP_4$ with a locally free action of $SL(2)$. The proper transform
of $\bar\Sigma_1^2$ in $\bP_4$ is a $(\PP^{n-2})^2$-bundle on
$\bar\Sigma_1^2$. %\texttt{In fact, this blow-down is the contraction
%of an extremal ray in the cone of curves in $N_1(X)_\QQ$ and thus
%$\bP_4$ is an algebraic variety.}

Next, the normal bundle to $\Sigma_0^3$ is locally the direct sum of
three vector bundles, say $V_1\oplus V_2\oplus V_3$ and the normal
cone in $\bar\Sigma_0^2$ is $V_1\cup V_2\cup V_3$ while the normal
cone in $\bar\Sigma_0^1$ is $\bigcup_{i\ne j}(V_i\oplus V_j)$. Hence
the proper transform of $\Sigma_0^3$ in $\bP_3$ is the blow-up of
$\PP (V_1\oplus V_2\oplus V_3)$ along $\sqcup_i\PP V_i$ and then
along $\sqcup_{i\ne j}\widetilde{\PP (V_i\oplus V_j)}$ where
$\widetilde{\PP (V_i\oplus V_j)}$ is the blow-up of $\PP (V_i\oplus
V_j)$ along $\PP V_i\sqcup \PP V_j$. From \S\ref{sectionbirat}, we
can blow down $\bP_4$ further along the proper transform
$\bar\Sigma_4^3$ of $\bar\Sigma_3^3$, to obtain a complex manifold
$\bP_5$. The image $\bar\Sigma_5^3$ in $\bP_5$ of $\bar\Sigma_4^3$
is a $(\PP^{n-2})^3$-bundle on $\Sigma_0^3$. %\texttt{Again, this
%blow-down is the contraction of an extremal ray and thus $\bP_5$ is
%an algebraic variety.}

%%%%%%%%%%%%%%%%%%%%%%%%%%%%%%%%%%%%%%%%%%%%%%%%%%%%%%%%%%
%%%%%%%%%%%%%%%%%%%%%%%%%%%%%%%%%%%%%%%%%%%%%%%%%%%%%%%%%%

\subsection{The Kontsevich moduli space as an $SL(2)$-quotient}

In this subsection, we prove that there is a family of stable maps
of degree 3 to $\PP^{n-1}$ parameterized by $\bP_3$ so that the
rational map $\psi_0:\bP_0\dashrightarrow\mzznthree$ extends to a
morphism $\psi_3: \bP_3\to \mzznthree$. Furthermore, $\psi_3$
factors through the blow-downs $\bP_3\to \bP_4\to\bP_5$ and we
obtain an $SL(2)$-invariant map $\psi_5:\bP_5\to \mzznthree$. The
induced map $\bar\psi_5:\bP_5/SL(2)\to \mzznthree$ is bijective and
hence an isomorphism of varieties because $\mzznthree$ is normal
projective.

In summary, we have the following diagram.
\[
\xymatrix{ \bP_3\ar[r]^{\pi_3}\ar[d]_{\pi_4}\ar[dr]^{p_3}&
\bP_2\ar[r]^{\pi_2}&\bP_1\ar[r]^{\pi_1}&\bP_0\ar[d]\\
\bP_4\ar[d]_{\pi_5} &\bP_3/SL(2)\ar[d]\ar[rr]\ar[drr]^{\bar\psi_3} &&\bP_0/SL(2)\ar@{.>}[d]^{\bar\psi_0}\\
\bP_5\ar[r]^{p_5}& \bP_5/SL(2)\ar[rr]^{\cong}_{\bar\psi_5} &&
\mzznthree . }
\]
\begin{prop}
$\bP_5/SL(2)$ is isomorphic to $\mzznthree$.
\end{prop}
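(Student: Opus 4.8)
The plan is to follow the template of the degree-two case in \S\ref{sectiondegtwo}: first build an explicit family of degree-$3$ stable maps over $\bP_3$ by elementary modification, then show the resulting morphism descends through the two blow-downs to a bijective morphism $\bar\psi_5$, and finally upgrade bijectivity to an isomorphism using the normality of $\mzznthree$.

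First I would construct the family of stable maps parameterized by $\bP_3$. Starting from the tautological homomorphism $V^*\to W\otimes (V\otimes W)^*$ of \eqref{eq0}, we have the $SL(2)$-invariant rational map $\varphi_0:\PP^1\times\bP_0\dashrightarrow\PP V$ that is a morphism exactly on $\PP^1\times\Sigma^0_0$. Pulling $\varphi_0$ back along $\id\times\pi$ with $\pi=\pi_1\circ\pi_2\circ\pi_3:\bP_3\to\bP_0$, the indeterminacy locus becomes a union of disjoint sections over $\bP_3$, because by Lemma \ref{lem2} and the stratum descriptions above the base points separate after the three blow-ups. Blowing up $\PP^1\times\bP_3$ along these sections produces $\mu:\Gamma\to\PP^1\times\bP_3$ together with a morphism $f=\varphi_0\circ\mu:\Gamma\to\PP^{n-1}$; concretely the pulled-back evaluation map $V^*\otimes\cO_\Gamma\to\mu^*\cO(3,1)$ vanishes simply along the exceptional divisors $\cE$, so after twisting down by those divisors it becomes surjective and defines $f$. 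This is the degree-$3$ analogue of the elementary modification of \S\ref{sectiondegtwo} (cf. \cite{Kiem,CCK}). Contracting the fiber components on which $f$ is constant stabilizes the family, yielding a family of degree-$3$ stable maps over $\bP_3$ and hence an $SL(2)$-invariant morphism $\psi_3:\bP_3\to\mzznthree$ extending $\psi_0$.

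Next I would check the factorization through $\bP_3\xrightarrow{\pi_4}\bP_4\xrightarrow{\pi_5}\bP_5$. The fibers of $\pi_4$ are the $\PP^1$'s comprising the proper transform of the second exceptional divisor $\bar\Sigma^2_2$, and the fibers of $\pi_5$ are the $\PP^2$'s comprising the proper transform of the first exceptional divisor $\bar\Sigma^3_1$, as described in the blow-down subsection above. By a local computation in the spirit of the three-$\PP^1$-tree analysis of \S\ref{sectiondegtwo}, the stable map produced by $\psi_3$ depends only on the data retained in $\bP_4$ and then in $\bP_5$: each contracted $\PP^1$ or $\PP^2$ direction parameterizes a choice internal to a component on which $f$ is already constant (or which is removed during stabilization), so $\psi_3$ is constant along each fiber. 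Since $\bP_4$ and $\bP_5$ are exactly the equivariant blow-downs built via Proposition \ref{birprojprop} of \S\ref{sectionbirat}, $\psi_3$ factors as $\psi_3=\psi_5\circ\pi_5\circ\pi_4$ for a morphism $\psi_5:\bP_5\to\mzznthree$, and passing to quotients gives $\bar\psi_5:\bP_5/SL(2)\to\mzznthree$.

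Finally I would prove $\bar\psi_5$ is bijective and conclude. Over the open stratum $\Sigma^0_0/SL(2)$, $\bar\psi_5$ agrees with $\bar\psi_0$ and is an isomorphism onto the locus of maps with smooth domain, so $\bar\psi_5$ is birational. For the boundary I would match the stratification of $\bP_5/SL(2)$ inherited from \eqref{eq4} with the boundary strata of $\mzznthree$ by topological type, using Corollary \ref{stratadesc}: the locus coming from $\bar\Sigma^2_1\cong\overline{\mathbf{M}}_{0,2}(\PP^{n-1},1)/S_2$ maps to stable maps whose domain is two lines, the locus from $\bar\Sigma^1_2$ to maps with a degree-$2$ and a degree-$1$ component, and the deepest locus over $\Sigma^3_0$ (carrying the $S_3$-action noted in \S5.1) to the fully degenerate configurations; checking on points that each map is a bijection of strata—together with the injectivity the blow-downs were designed to enforce—shows $\bar\psi_5$ is a bijective morphism of projective varieties. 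Since $\mzznthree$ is a normal projective variety (\cite{FP,Kim-Pand}) and $\bar\psi_5$ is a bijective birational morphism, it is an isomorphism by the Riemann existence theorem \cite[p.442]{Hartshorne}. The main obstacle is precisely this boundary bijectivity, above all injectivity near $\Sigma^3_0$: one must verify that the three blow-ups and two blow-downs collapse exactly the ambiguity in reconstructing a stable map from an $n$-tuple of cubics and that the residual $S_3$- and $S_2$-symmetries are correctly absorbed, so that distinct stable maps never share a preimage. This rests on the local models underlying Lemma \ref{lem2} and the explicit blow-up/down descriptions of the preceding subsections.
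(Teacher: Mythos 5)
Your overall strategy is the same as the paper's (build a family of stable maps over $\bP_3$ by elementary modification, descend through the two blow-downs, prove bijectivity, conclude by normality and the Riemann existence theorem), but there is a genuine gap in your first step. You pull $\varphi_0$ back to $\PP^1\times\bP_3$ in one shot and assert that the indeterminacy locus is a union of \emph{disjoint} sections, so that a single blow-up $\mu:\Gamma\to\PP^1\times\bP_3$ followed by a single twist by $-\cE$ makes the evaluation map surjective. This fails over the intersections of the exceptional divisors. Over a point of $\bar\Sigma_3^3\cap\bar\Sigma_3^2\cap\bar\Sigma_3^1$ the set-theoretic indeterminacy locus in the fiber is still just the three base points of the underlying point of $\Sigma_0^3$, but the sections attached to the three strata overlap there, and the pulled-back evaluation map $V^*\otimes\cO\to\cO(3,1)$ vanishes to order up to three at a shared base point (once for each exceptional divisor the point lies over); a single reduced blow-up with a single twist neither removes the base locus nor produces the correct domain curves. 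The correct semistable models over the deeper strata contain \emph{chains} of rational curves --- for instance the paper's computation with $D_\mu$ at $\mu=0$ yields a fiber whose components are mapped by $(a_jt_0+b_jt_1)$, $(a_j)$, $(a_jt_0+e_jt_1)$, $(b_jt_0+d_jt_1)$, i.e.\ a length-two chain hanging off the central component --- and such chains cannot arise from one blow-up of $\PP^1\times\bP_3$ along a reduced codimension-two center. The construction has to be iterated, as in the paper: blow up $\PP^1\times\bP_1$ along the three sections over $\bar\Sigma_1^3$ and twist to get $H_1$ and $ev_1$, then blow up $\Gamma_1\times_{\bP_1}\bP_2$ along the two sections over $\bar\Sigma_2^2$ and twist again, then once more over $\bar\Sigma_3^1$, so that the evaluation map $ev_3:\cO_{\Gamma_3}^{\oplus n}\to H_3$ has a chance of being surjective.

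The rest of your outline (constancy of $\psi_3$ along the fibers of $\pi_4$ and $\pi_5$ via Proposition \ref{birprojprop}, stratum-by-stratum bijectivity of $\bar\psi_5$, and the normality argument) matches the paper. Note, however, that both the surjectivity of the evaluation map and the bijectivity of $\bar\psi_5$ are precisely the points the paper settles by explicit local computations on curves through $\Sigma_0^3$, $\Sigma_1^2$ and $\Sigma_2^1$; your proposal defers these to "local computations in the spirit of \S\ref{sectiondegtwo}" without carrying them out, so even after repairing the construction the proof is complete only once those verifications are supplied.
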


Let $\varphi_1':\PP^1\times \bP_1\to \PP^1\times
\bP_0\dashrightarrow \PP^{n-1}$ be the composition of $\id\times
\pi_1$ and $\varphi_0$. The locus of undefined points of
$\varphi_1'$ lying over $\bar\Sigma_1^3$ consists of three sections
over $\bar\Sigma_1^3$ and let $\mu_1:\Gamma_1\to \PP^1\times \bP_1$
be the blow-up along the union of the three sections which is a
codimension 2 subvariety. Let $H_1=\mu_1^*\cO_{\PP^1\times
\bP_1}(3,1)\otimes \cO_{\Gamma_1}(-\cE_1)$ where $\cE_1$ is the
exceptional divisor of the blow-up $\mu_1$. Then the evaluation
homomorphism $ev_1':\cO_{\Gamma_1}^{\oplus n}\to
\mu_1^*\cO_{\PP^1\times \bP_1}(3,1)$ factors through
$ev_1:\cO_{\Gamma_1}^{\oplus n}\to H_1$ since $ev_1'$ vanishes along
$\cE_1$ by construction. Obviously $\Gamma_1\to \bP_1$ is a flat
family of semistable curves of genus $0$. Let
$\varphi_1=\varphi_1'\circ \mu_1$.

Next, we let $\varphi_2':\Gamma_1\times_{\bP_1}\bP_2\to \Gamma_1
\dashrightarrow \PP^{n-1}$ be the composition of $\varphi_1$ with
the obvious morphism $\Gamma_1\times_{\bP_1}\bP_2\to \Gamma_1$. Over
the divisor $\bar\Sigma_2^2$, there are two sections on
$\Gamma_1\times _{\bP_1}\bP_2$ where $\varphi_2'$ is not defined.
Let $\mu_2:\Gamma_2\to \Gamma_1\times_{\bP_1}\bP_2$ be the blow-up
along the union of these two sections. Certainly $\Gamma_2$ is a
family of semistable curves of genus 0 and the pull-back
$ev_2':\cO_{\Gamma_2}^{\oplus n}\to \mu_2^*H_1$ of $ev_1$ factors
through $ev_2: \cO_{\Gamma_2}^{\oplus n}\to H_2=\mu_2^*H_1\otimes
\cO_{\Gamma_2}(-\cE_2)$ where $\cE_2$ is the exceptional divisor of
$\mu_2$, since $ev_2'$ is vanishing along $\cE_2$. Let
$\varphi_2=\varphi_2'\circ\mu_2$.

Similarly, let $\varphi_3':\Gamma_2\times_{\bP_2}\bP_3\to \Gamma_2
\dashrightarrow \PP^{n-1}$ be the composition of $\varphi_2$ with
the obvious morphism $\Gamma_2\times_{\bP_2}\bP_3\to \Gamma_2$. Let
$\mu_3:\Gamma_3\to \Gamma_2\times_{\bP_2}\bP_3$ be the blow-up of
the section of undefined points of $\varphi_3'$ over
$\bar\Sigma_3^1$. Then $\Gamma_3$ is a family of semistable curves
of genus 0 parameterized by $\bP_3$ and the pull-back
$ev_3':\cO_{\Gamma_3}^{\oplus n}\to \mu_3^*H_2$ of $ev_2$ factors
through $ev_3: \cO_{\Gamma_3}^{\oplus n}\to H_3=\mu_3^*H_2\otimes
\cO_{\Gamma_3}(-\cE_3)$ where $\cE_3$ is the exceptional divisor of
$\mu_2$. We claim $ev_3$ is surjective and thus
$\varphi_3=\varphi_3'\circ\mu_3$ is a morphism extending
$\varphi_0$.

Indeed, we can check this by direct local computation. For example,
let $(a_j)\ne 0$ in $\CC^n$ and let $x=(a_jt_0t_1(t_0+t_1))_{1\le
j\le n}$ be a point in $\Sigma_0^3$. For $(b_j)$, $(c_j)$, $(d_j)$
in $\CC^n-\{0\}$, not parallel to $(a_j)$, consider the curve
$$C=(f_j^\lambda=a_jt_0t_1(t_0+t_1)+\lambda c_jt_0^3+\lambda
(c_j-b_j+d_j)t_0^2t_1+\lambda b_jt_1^3)_{1\le j\le n,\lambda\in\CC}$$ in
$\bP_0$ passing through $x$ at $\lambda=0$. If we restrict the above
construction to $C$, then $\Gamma_1$ at $\lambda=0$ is the comb of
four $\PP^1$'s and the restriction of $ev_1$ to each irreducible
component is respectively given by
\[
(a_jt_0+b_jt_1),\ \ \  (a_jt_0+c_jt_1),\ \ \ (a_jt_0+d_jt_1),\ \ \
(a_j)
\]
for homogeneous coordinates $t_0,t_1$ of $\PP^1$. Hence
there is no base point of $ev_1$
and the stable map to $\PP^{n-1}$ thus obtained depends only on the
three points in $\PP^{n-2}$ corresponding to $(b_j)$, $(c_j)$ and
$(d_j)$.

Next, let $(a_j), (b_j)\ne 0$ and $(a_j)$ is not parallel to $(b_j)$
in $\CC^n$. Let $x=((a_j t_0 + b_j t_1)t_0 t_1)_{1\le j\le n}$ be a
point in $\Sigma_1^2$. For $(c_j)$ which not parallel to $(a_j)$ and
for $(d_j)$ which is not parallel to $(b_j)$, consider the curve
$$D=((a_jt_0+b_jt_1)t_0t_1+\lambda c_jt_0^3+\lambda d_jt_1^3)_{1\le
j\le n, \lambda\in\CC}.$$ This represents a curve passing through
$x$ at $\lambda=0$. Then $\Gamma_2$ restricted to $\lambda=0$ is the
union of three lines and they are mapped to $\PP^{n-1}$ by
\[
(a_jt_0+b_jt_1),\ \ \ (a_jt_0+c_jt_1),\ \ \ (b_jt_0+d_jt_1).
\]
This obviously is a stable map to $\PP^{n-1}$.

Finally, suppose $(c_j)=(0)$ in the above case. This is the case
where you choose the normal direction to $\Sigma_1^2$ contained in
$\bar\Sigma_1^1$. %By scaling $\lambda$, we may assume that $(c_j) =
%(a_j)$. Consider the curve
%$$E=((a_jt_0+b_jt_1)t_0t_1+\lambda a_jt_0^3+\lambda d_jt_1^3)_{1\le j\le n, \lambda\in\CC}.$$
Then the fiber corresponding to $\lambda=0$ in $\Gamma_2$ has three
irreducible components and the evaluation maps on the components are
respectively
\[
(a_jt_0+b_jt_1),\ \ \ (a_jt_0),\ \ \ (b_jt_0 +d_jt_1).
\]
So still there is a point (in the second component) at which the map
to $\PP^{n-1}$ is not well-defined. We choose a curve in $\bP_2$ to
this point, whose direction is normal to $\Sigma_2^1$. This amounts
to considering the limits of
$$D_\mu=((a_jt_0+b_jt_1)t_0t_1+\lambda \mu e_jt_0^3
+\lambda d_jt_1^3)_{1\le j\le n, \lambda\in\CC},\quad \mu\in \CC$$
for some $(e_j)$ not parallel to $(a_j)$. By the previous case, for
$\mu\ne 0$, the fiber of $D_\mu$ in $\Gamma_2$ at $\lambda=0$ has
three components and they are mapped to $\PP^{n-1}$ respectively by
\[
(a_jt_0+b_jt_1),\ \ \ (a_jt_0+\mu e_jt_1),\ \ \ (b_jt_0 +d_jt_1).
\]
So we have a family of nodal curves parameterized by $\mu\in \CC$
and stable maps from $E_\mu|_{\lambda=0}$ for $\mu\ne 0$. The
construction of $\Gamma_3$ blows up the only base point
$(t_0:t_1)=(0:1)$ in the second component for $\mu=0$ and $ev_3$
becomes\[ (a_jt_0+b_jt_1),\ \ \ (a_j), \ \ \ (a_jt_0+ e_jt_1),\ \ \
(b_jt_0 +d_jt_1)
\]
by the elementary modification.
%
%In fact, this curve represents the same direction with
%$$E'=(((a_j-\lambda b_j)t_0^2+b_jt_0t_1+\lambda d_jt_1^2)(t_1+\lambda
%t_0))_{1\le j \le n, \lambda\in\CC},$$
%and this curve belongs to $\bar\Sigma_1^1$.
%For $(e_j) \nparallel (a_j)$, consider a curve
%$$F=(((a_j-\lambda b_j)t_0^2+b_jt_0t_1+\lambda d_j t_1^2)(t_1+\lambda t_0)
%+ \mu(e_j-2\lambda b_j + \lambda^3 d_j)t_0^3)_{1 \le j \le n, \mu
%\in \CC}$$ in $\bar\Sigma_2^1$ parameterized by $\mu$. After
%changing the coordinates by $t_2 = t_1 + \lambda t_0$, we get
%$$F=((a_j-2\lambda b_j+\lambda^3d_j)t_0^2 + (b_j-2\lambda^2d_j)t_0t_2+\lambda d_jt_2^2)t_2 +
%\mu(e_j-2\lambda b_j + \lambda^3d_j)t_0^3.$$
%Then $\Gamma_3$ restricted to $F$ has exceptional line $(a_j-2\lambda b_j + \lambda^3d_j)t_0 +
%(c_j-2\lambda b_j+\lambda^3d_j)t_1$, and taking limit $\lambda \to 0$, then
%the exceptional curve is $(a_jt_0 + b_jt_1)$.
%So after two blow-ups, the curve in $\Gamma_3$ corresponded to $x$ and normal directions
%$E$ and $F$ consists of four components
%\[
%  (a_jt_0+b_jt_1),\ \ \ (a_jt_0+a_jt_1), \ \ \ (b_jt_0+d_jt_1), \ \ \ (a_jt_0 + e_jt_1).
%\]
If we contract down the constant component, then we get a stable map
of three irreducible components
\[
  (a_jt_0+b_jt_1),\ \ \ (a_jt_0+e_jt_1), \ \ \ (b_jt_0+d_jt_1).
\]

By checking case by case as above, we conclude that $ev_3$ is
surjective and $\psi_3$ factors through a holomorphic map $\psi:
\bP_5\to \mzznthree$. The bijectivity of $\bar\psi_5$ can be also
checked by case by case local computation as above. We omit the
cumbersome details.

When $n=2$, the third blow-up map $\pi_3$ is an isomorphism since
$\bar\Sigma_2^1$ is a divisor and $\pi_2$ is canceled with $\pi_4$
while $\pi_1$ is canceled with $\pi_5$. Therefore, $\bar\psi_0$ is
an isomorphism and we have $\mzz (\PP^1,3)\cong\bP_0/G=\PP
(\mathrm{Sym}^3(\CC^2)\otimes \CC^2)\git SL(2)$.

Finally, from the Riemann existence theorem \cite[p.442]{Hartshorne}
we deduce that $\bP_5/SL(2)$ is a projective variety and
$\bar\psi_5$ is an isomorphism of varieties.
%\begin{rema}{\em The above proof shows that $\bP_5$ is an open subset
%of $\mzz(\PP^1\times \PP^{n-1},(1,3))$ because the family of stable
%maps parameterized by $\bP_3$ constructed above, have distinguished
%component $\PP^1$ coming from $\varphi_0:\PP^1\times
%\bP_0\dashrightarrow \PP^{n-1}$ and the induced map
%$\bP_3\to\mzz(\PP^1\times \PP^{n-1},(1,3))$ factors through an
%inclusion $\bP_5\to \mzz(\PP^1\times \PP^{n-1},(1,3))$. }\end{rema}

%%%%%%%%%%%%%%%%%%%%%%%%%%%%%%%%%%%%%%%%%%%%%%%%%%%%%%%%%%
%%%%%%%%%%%%%%%%%%%%%%%%%%%%%%%%%%%%%%%%%%%%%%%%%%%%%%%%%%

\section{Cohomology ring of $\mzznthree$}

In this section, we study the cohomology of $\mzznthree$ by using
Theorem \ref{mainconstthm}.
\subsection{Betti numbers}
We compute the Poincar\'e polynomial of $\mzznthree$ in this
subsection. We use the notation $$P_t(X)=\sum t^k\dim H^k(X)\and
P_t^G(X)=\sum t^k\dim H^k_G(X)$$ for a topological space $X$. Let
$G=SL(2)$.

From \S\ref{subsectionABK}, the equivariant Poincar\'e series of
$\bP_0$ is
\[
P^G_t(\bP_0)=\frac{(1-t^{4n-2})(1-t^{4n})}{(1-t^2)(1-t^4)}.
\]
By the blow-up formula, we have
\[
P^G_t(\bP_1)=P^G_t(\bP_0)+\frac{t^2-t^{6n-6}}{1-t^2}
\frac{(1-t^{2n})}{(1-t^2)}
\]
since the blow-up center $\Sigma_0^3$ has quotient $\PP^{n-1}$.
Similarly, because $\bar\Sigma_1^2/G$ is a $\PP^2$-bundle over the
Grassmannian $Gr(2,n)$, we get
\[
P^G_t(\bP_2)=P^G_t(\bP_1)+\frac{t^2-t^{4n-4}}{1-t^2}\left(
(1+t^2+t^4)\frac{(1-t^{2n})(1-t^{2n-2})}{(1-t^2)(1-t^4)} \right) .
\]
Since $\bar\Sigma_2^1/G$ is the blow-up of
$\overline{\bM}_{0,1}(\PP^{n-1},2)$ along $\PP^{n-2}\times_{S_2}
\PP^{n-2}$ bundle on $\PP^{n-1}$, we have
\[
P^G_t(\bP_3)=P^G_t(\bP_2)+\frac{t^2-t^{2n-2}}{1-t^2}\left(
(1+t^2)\frac{(1-t^{2n})^2(1-t^{2n-2})}{(1-t^2)^3} \right) .
\]
The map $\bP_3/G\to \bP_4/G$ contracts the proper transform of
the exceptional divisor of the second blow-up, $\bar\Sigma_3^2/G$.
It is the blow-up of $\bar\Sigma_2^2/G$ along $(\bar\Sigma_2^1 \cap \bar\Sigma_2^2)/G$,
and this blow-up center is $\PP^{n-2}$ bundle over $\PP^1 \times \PP^1$ bundle over $Gr(2, n)$.
So we obtain
\[
P^G_t(\bP_4)=P^G_t(\bP_3)-\frac{t^2}{1+t^2}\left(
(1+t^2+t^4)\frac{(1-t^{2n})(1-t^{2n-2})}{(1-t^2)(1-t^4)}
\frac{(1-t^{4n-4})}{(1-t^2)}\right.
\]
\[\left.
+\frac{t^2-t^{2n-2}}{1-t^2}(1+t^2)^2
\frac{(1-t^{2n-2})(1-t^{2n})}{(1-t^2)(1-t^4)}\frac{(1-t^{2n-2})}{(1-t^2)}\right) .
\]
One should be careful about the $S_2$ action. Similarly, after the second blow-down, we obtain
\[
P^G_t(\bP_5)=P^G_t(\bP_4)-\frac{t^2+t^4}{1+t^2+t^4}
\frac{(1-t^{2n})^2(1-t^{2n-2})(1-t^{2n+2})}{(1-t^2)^3(1-t^4)} .
\]
In summary, we proved
\[
P_t(\mzznthree)=\frac{(1-t^{4n-2})(1-t^{4n})}{(1-t^2)(1-t^4)} +
\frac{t^2-t^{6n-6}}{1-t^2} \frac{(1-t^{2n})}{(1-t^2)}
\]
\[
+\frac{t^2-t^{4n-4}}{1-t^2}\left(
(1+t^2+t^4)\frac{(1-t^{2n})(1-t^{2n-2})}{(1-t^2)(1-t^4)} \right)
\]
\[
+\frac{t^2-t^{2n-2}}{1-t^2}\left(
(1+t^2)\frac{(1-t^{2n})^2(1-t^{2n-2})}{(1-t^2)^3} \right)
\]
\[
-\frac{t^2}{1+t^2}\left(
(1+t^2+t^4)\frac{(1-t^{2n})(1-t^{2n-2})}{(1-t^2)(1-t^4)}
\frac{(1-t^{4n-4})}{(1-t^2)}+\right.
\]
\[
\left.\frac{t^2-t^{2n-2}}{1-t^2}(1+t^2)^2
\frac{(1-t^{2n-2})(1-t^{2n})}{(1-t^2)(1-t^4)}\frac{(1-t^{2n-2})}{(1-t^2)}  \right)
\]
\[
-\frac{t^2+t^4}{1+t^2+t^4}
\frac{(1-t^{2n})^2(1-t^{2n-2})(1-t^{2n+2})}{(1-t^2)^3(1-t^4)}
\]
\[
=\left(\frac{1-t^{2n+8}}{1-t^6}+2\frac{t^4-t^{2n+2}}{1-t^4}\right)
\frac{(1-t^{2n})}{(1-t^2)}\frac{(1-t^{2n})(1-t^{2n-2})}{(1-t^2)(1-t^4)}.
\]
%\[
%= \frac{(1-t^{4n-2})(1-t^{4n})}{(1-t^2)(1-t^4)} + \frac{t^2-t^{6n-6}}{1-t^2}\frac{(1-t^{2n})}{(1-t^2)}
%\]
%\[
%  +\frac{t^4-t^{4n-4}}{1-t^4}\frac{(1-t^6)}{(1-t^2)}\frac{(1-t^{2n})(1-t^{2n-2})}{(1-t^2)(1-t^4)}
%\]
%\[
%  +\frac{t^2-t^{2n-2}}{1-t^2}\frac{(1-t^{2n+2})(1-t^{2n})(1-t^{2n-2})}{(1-t^2)(1-t^4)(1-t^6)}
%\]
%\[
%  -\frac{t^2-t^6}{1-t^2}\frac{(1-t^{2n})}{(1-t^2)}\frac{(1-t^{2n+2})(1-t^{2n})(1-t^{2n-2})}{(1-t^2)(1-t^4)(1-t^6)}.
%\]

%%%%%%%%%%%%%%%%%%%%%%%%%%%%%%%%%%
\subsection{Cohomology ring}
Since we know how to compare the cohomology rings before and after a
blow-up and we know $H^*_G(\bP_0)$ from \eqref{cohringblowup}, it
should be possible, at least in principle, to calculate the
cohomology ring of $\mzznthree$ explicitly. However it seems
extremely difficult to work out the details in reality. We content
ourselves with the limit case where $n\to \infty$, i.e.
$H^*(\overline{\bM}_{0,0}(\PP^\infty, 3))$.

First of all, by \eqref{eqcohex2}, we have
\[
H^*_G(\bP_0)\cong \QQ[\xi,\alpha^2]/\langle
\frac{(\xi+\alpha)^n(\xi+3\alpha)^n-(\xi-\alpha)^n(\xi-3\alpha)^n}{2\alpha}
\]
\[
\frac{(\xi+\alpha)^n(\xi+3\alpha)^n+(\xi-\alpha)^n(\xi-3\alpha)^n}{2}
\rangle .\]
for a degree 2 class $\xi$ and a degree 4 class $\alpha^2$. If we
take $n\to \infty$, we obtain $H^*_G(\bP_0)=\QQ[\xi,\alpha^2]$. The
first blow-up gives us a new degree 2 generator $\rho_1$ and we
obtain
\[
H^*_G(\bP_1)=\QQ[\xi,\alpha^2,\rho_1]/\langle \alpha^2\rho_1 \rangle
\]
because $\alpha^2$ generates the kernel of the surjective
restriction to the center of the first blow-up whose codimension is
infinite. Next, it is easy to check that the restriction to the
center of the second blow-up is an isomorphism and the codimension
is infinite. Hence, we have
\[
H^*_G(\bP_2)=\QQ[\xi,\alpha^2,\rho_1,\rho_2]/\langle \alpha^2\rho_1
\rangle .
\]
For the third blow-up, the restriction to the blow-up center is not
surjective any more. The blow-up center is obtained by blowing up
$[\PP^1\times \PP^{3n-1}]^s$. Let $\xi_1$ and $\xi_2$ be the
generators of $\PP^1$ and $\PP^{3n-1}$ respectively. Let $\rho_3$ be
minus the Poincar\'e dual of the exceptional divisor. Then in
addition to $\rho_3$, we need to include another generator
$\rho_3\xi_1$ of degree 4 which we denote by $\sigma$. Then by a
routine calculation, we obtain
\[
H^*_G(\bP_3)=\QQ[\xi,\alpha^2,\rho_1,\rho_2,\rho_3,\sigma]/\langle
\alpha^2\rho_1, \rho_1\sigma, \sigma^2-4\alpha^2\rho_3^2 \rangle .
\]
Now, $H^*_G(\bP_4)$ is a subring of $H^*_G(\bP_3)$. By the
description of the normal bundle to $\bar\Sigma_4^2$ in the previous
section and the blow-up formula, we see that
\[
H^*_G(\bP_4)=\QQ[\xi,\alpha^2,\rho_1,\rho_2^2,\rho_3,\sigma]/\langle
\alpha^2\rho_1, \rho_1\sigma, \sigma^2-4\alpha^2\rho_3^2 \rangle .
\]
Similarly, we obtain
\[
H^*_G(\bP_5)=\QQ[\xi,\alpha^2,\rho_1^3,\rho_2^2,\rho_3,\sigma]/\langle
\alpha^2\rho_1^3, \rho_1^3\sigma, \sigma^2-4\alpha^2\rho_3^2 \rangle
.
\]
So we proved
\[
H^*(\overline{\bM}_{0,0}(\PP^\infty,3))=\QQ[\xi,\alpha^2,\rho_1^3,\rho_2^2,\rho_3,\sigma]/\langle
\alpha^2\rho_1^3, \rho_1^3\sigma, \sigma^2-4\alpha^2\rho_3^2 \rangle
.
\]
where $\xi,\rho_3$ are degree 2 classes, $\sigma, \rho_2^2,
\alpha^2$ are degree 4 classes, and $\rho_1^3$ is a degree 6 class.
This is isomorphic to the description in \cite{BOH}.

When $n=2$, $H^*(\mzz(\PP^1,3))\cong H^*_G(\bP_0)$ because
$\bar\psi_0$ is an isomorphism. Hence $H^*(\mzz(\PP^1,3))$ is given
by \eqref{eqcohex2}.

\subsection{Picard group} We use the notation of \S5.
Since the locus of nontrivial automorphisms in $\mzznthree$ has
codimension at least two, we can delete them when calculating the
Picard group. We also delete $\bar\Sigma_5^2/G$ and
$\bar\Sigma_5^3/G$ whose codimensions are at least two. Then on the
resulting open set of $\mzznthree$, the birational map
$\bar\psi^{-1}:\mzznthree=\bP_5/G \dashrightarrow \bP_0/G$ coincides
with the blow-up map $\pi_3$ since the blow-up/down centers for
$\pi_1,\pi_2,\pi_4,\pi_5$ were deleted. Hence $\bar\psi^{-1}$ is an
honest blow-up along a smooth subvariety. For $n=2$ only,
$\bar\psi^{-1}$ is an isomorphism. By the blow-up formula for Picard
groups in \cite[II,\S8]{Hartshorne}, we obtain
\[
\mathrm{Pic}(\mzznthree)\cong \left\{
\begin{matrix}\pi_3^*\mathrm{Pic}(\bP_0/G)\oplus \ZZ\Delta & \text{ for }
 & n\ge 3\\
\pi_3^*\mathrm{Pic}(\bP_0/G) & \text{ for } & n=2\end{matrix}\right.
\]
where $\Delta=\bar\Sigma_5^1/G$ is the boundary divisor of reducible
curves. On the other hand, by Kempf's descent lemma \cite{DN} and by
checking the action of the stabilizers on the fibers of line
bundles, we obtain that the Picard group $\mathrm{Pic}(\bP_0/G)$ is
isomorphic to the equivariant Picard group
$\mathrm{Pic}(\bP_0)^G=\ZZ\mathcal{O}(2)$ which is a subgroup of
$\mathrm{Pic}(\bP_0)\cong \mathrm{Pic}(\PP^{4n-1})\cong
\ZZ\mathcal{O}(1)$.

Lastly, we note that the closure of the codimension one subset of
elements in $\bP_0$ whose images meet a fixed codimension 2 subspace
is $\mathcal{O}(6)$: Given two linear equations for the subspace, we
obtain two polynomials of degree three in $t_0,t_1\in
H^0(\PP^1,\mathcal{O}(1))$. The condition for a degree 3 curve to
meet the subspace is given by the resultant of the two polynomials
of degree 3 in $t_0,t_1$. This divisor is smooth and contains
$\Sigma^1$. Hence $\pi_3^*\mathcal{O}(6)$ is $H+\Delta$. Therefore,
$\pi_3^*\mathcal{O}(2)=\frac13(H+\Delta)$.
%
%Therefore, $\mathrm{Pic}(\mzznthree)$ is the free abelian group
%generated by $\frac13 H$ and $\Delta$ for $n\ge 3$ and $\ZZ (\frac13
%H)$ for $n=2$.
This completes the proof of Theorem \ref{thm1.3}.

%%%%%%%%%%%%%%%%%%%%%%%%%%%%%%%%%%%%%%%%%%%%%%%%%%%%%%%%%%
%%%%%%%%%%%%%%%%%%%%%%%%%%%%%%%%%%%%%%%%%%%%%%%%%%%%%%%%%%

\end{document}